\newtheorem{lemmaArt}{Lemma}[section]
\newtheorem{propArt}[lemmaArt]{Proposition}
\theoremstyle{definition}
\newtheorem{exampleArt}[lemmaArt]{Example}
\newenvironment{eq}{\begin{equation}}{\end{equation}}
\newcommand{\Char}{\mathop{\rm char}}
\newcommand{\id}[1]{{{\rm id}\{{#1}\}}}
\newcommand{\FF}{\mathbb{F}}
\newcommand{\ZZ}{\mathbb{Z}}
\newcommand{\NN}{\mathbb{N}}
\newcommand{\tq}{ \ | \ }
\newcommand{\FX}{\FF\langle X\rangle}
\newcommand{\Fxy}{\FF\langle x,y\rangle}
\newcommand{\A}{\mathsf{A}}
\newcommand{\B}{\mathsf{B}}
\newcommand{\W}{\mathsf{W}}
\newcommand{\algA}{\mathcal{A}}
\newcommand{\algB}{\mathcal{B}}
\newcommand{\algV}{\mathcal{V}}
\newcommand{\LA}{\langle}
\newcommand{\RA}{\rangle}
\newcommand{\eqPI}{\sim_{\rm PI}}
\newcommand{\si}{\sigma}
\newcommand{\al}{\alpha}
\newcommand{\be}{\beta}
\newcommand{\ga}{\gamma}
\newcommand{\de}{\delta}
\newcommand{\De}{\Delta}
\newcommand{\Ga}{\Gamma}
\newcommand{\La}{\Lambda}
\newcommand{\Id}[1]{{{\rm Id}({#1})}}
\newcommand{\IdF}[1]{{{\rm Id}_{\FF}({#1})}}
\newcommand{\un}[1]{{\underline{#1}} }
\newcommand{\mdeg}{\mathop{\rm mdeg}}
\newcommand{\lin}{\mathop{\rm lin}}
\newcommand{\St}{{\rm St}}
\title{
    Identities for subspaces of the Weyl algebra
    }
\author{
    Artem Lopatin, Carlos Arturo Rodriguez Palma
    }
\abstract{%
    In this paper we describe the polynomial identities of degree 4 for a certain subspace of the Weyl algebra over an infinite field of arbitrary characteristic.  
    }
\keywords{
    Polynomial identities, Matrix identities, Weyl algebra, Positive characteristic.
    }
\begin{document}

\section{Introduction}\label{section_intro}

Assume that $\FF$ is an infinite field of arbitrary characteristic $p=\Char\FF\geq0$. All vector spaces and algebras are over $\FF$  and all algebras are unital and associative, unless stated otherwise. We write $\FF\LA x_1,\ldots,x_n\RA$ for the free unital $\FF$-algebra with free generators  $x_1,\ldots,x_n$. In case  the set of free generators is infinite and enumerable, and denoted by $X=\{x_1,x_2,\dots\}$, the corresponding free algebra is denoted by $\FF\LA X\RA$.

\subsection{Witt algebra $\W_1$}


The {\it Weyl algebra} $\A_1$ is the unital associative algebra over $\FF$ generated by letters $x$, $y$ subject to the defining relation $yx=xy+1$ (equivalently, $[y,x]=1$, where $[y,x]=yx-xy$), i.e., $$\A_1=\Fxy/\id{yx-xy-1}.$$

\noindent{}For $s>0$ define by $\A_1^{(-,s)}$ the $\FF$-span of $a y^s$ in $\A_1$ for all $a\in \FF[x]$. It is easy to see that the following two conditions hold:
\begin{enumerate}
\item[$\bullet$] the space $\A_1^{(-,s)}$ is closed with respect to the Lie bracket $[\,\cdot,\cdot\,]$;

\item[$\bullet$]  the Lie bracket $[\,\cdot,\cdot\,]$ is not trivially zero on $\A_1^{(-,s)}$ if and only if $s=1$ (for example, see Corollary 3.5 of~\cite{Lopatin_Rodriguez_II}).
\end{enumerate}
 Note that in case $p=0$ the space $\A_1^{(-,1)}$ together with the multiplication given by the Lie bracket is the Witt algebra $\W_1$, which is a simple infinite dimensional Lie algebra. Similarly, considering $n$  pairs $\{x_i,y_i\}$ $(1\leq i\leq n)$ instead of $\{x,y\}$ we can define  the $n^{\rm th}$ Witt algebra $\W_n$, which is also a simple infinite dimensional Lie algebra.

\subsection{Polynomial identities}\label{section_intro_PI} 

A polynomial identity for a unital $\FF$-algebra $\algA$ is an element $f(x_1,\ldots,x_m)$ of $\FF\LA X\RA$ such that $f(a_1,\ldots,a_m)=0$ in $\algA$ for all $a_1,\ldots,a_m\in \algA$.   The set $\IdF{\algA}=\Id{\algA}$ of all polynomial identities for $\algA$ is a T-ideal, that is,  $\Id{\algA}$ is an ideal of $\FX$ such that $\phi(\Id{\algA})\subset \Id{\algA}$ for every endomorphism  $\phi$ of $\FX$.  An algebra that satisfies a nontrivial polynomial identity is called a PI-algebra. A T-ideal $I$ of $\FF\LA X\RA$ generated polynomials $f_1,\ldots,f_k$ in $\FF\LA X\RA$ is the minimal T-ideal of $\FF\LA X\RA$ that contains $f_1,\ldots,f_k$. Denote $I=\Id{f_1,\ldots,f_k}$. We say that $f\in \FF\LA X\RA$  is a consequence of $f_1,\ldots,f_k$ if $f\in I$. Given a monomial $w$ in $\FF\LA x_1,\ldots,x_m\RA$, we write $\deg_{x_i}(w)$ for the number of letters $x_i$ in $w$ and $\mdeg(w)\in\NN^m$ for the multidegree $(\deg_{x_1}(w),\ldots,\deg_{x_m}(w))$ of $w$, where $\NN=\{0,1,2,\ldots\}$. An element $f\in\FX$ is called (multi)homogeneous if it is a linear combination of monomials of the same (multi)degree. Given $f=f(x_1,\ldots,x_m)$ of $\FX$, we write $f=\sum_{\un{\de}\in \NN^m} f_{\un{\de}}$ for multihomogeneous components $f_{\un{\de}}$ of $f$ with multidegree $\mdeg{f_{\un{\de}}}=\un{\de}$.  For $\un{\de}=(\de_1,\ldots,\de_m)$ we denote $|\un{\de}|=\de_1+\cdots+\de_m$. We say that algebras $\algA$, $\algB$ are PI-equivalent and write $\algA \eqPI \algB$ if $\Id{\algA} =\Id{\algB}$.

Given an $\FF$-subspace $\algV\subset \algA$,  we write $\IdF{\algV}=\Id{\algV}$ for the ideal of all polynomial identities for $\algV$. Note that  $\phi(\Id{\algV})\subset \Id{\algV}$ for every linear endomorphism  $\phi$ of $\FX$, but $\Id{\algV}$ is not a T-ideal in general. 

Assume that $p=0$. It is well-known that the algebra $\A_1$ does not have nontrivial polynomial identities. Namely, it follows from  Kaplansky's Theorem~\cite{Kaplansky} and  the fact that $\A_1$ is simple with ${\rm Z}(\A_1)=\FF$.
Nevertheless, some subspaces of $\A_1$ satisfy certain polynomial identities. As an example, Dzhumadil'daev proved that the standard polynomial $${\rm St}_N(x_{1},\ldots,x_{N})=\sum_{\sigma\in S_{N}}(-1)^{\sigma}x_{\sigma(1)}\cdots x_{\sigma(N)}$$
is a polynomial identity for $\A_1^{(-,s)}$ if and only if $N>2s$ (Theorem 1 of~\cite{Askar_2014}). More results on polynomial identities for some subspaces of $n^{\rm th}$ Weyl algebra were obtained in~\cite{Askar_2004, Askar_Yeliussizov_2015}.  The polynomial Lie identities for the $n^{\rm th}$ Witt algebra $\W_n$ were studied by Mishchenko~\cite{Mishchenko_1989}, Razmyslov~\cite{Razmyslov_book} and others. The well-known open conjecture claims that all polynomial identities for $\W_1$ follow from the standard Lie identity
$$\sum_{\sigma\in S_{4}}(-1)^{\sigma}[[[[x_0,x_{\si(1)}],x_{\si(2)}],x_{\si(3)}],x_{\si(4)}].$$
\noindent{}$\ZZ$-graded identities for $W_1$ were described by Freitas, Koshlukov and  Krasilnikov~\cite{W1_2015}. Moreover, $\ZZ$-graded identities for the related Lie algebra of the derivations of the algebra of Laurent polynomials were described in~\cite{Fideles_Koshlukov_2023_JA, Fideles_Koshlukov_2023_Camb}.  

The situation is drastically different in case $p>0$. Namely, $\A_1$ is PI-equivalent to the algebra $M_p$ of all $p\times p$ matrices over $\FF$. Moreover,  the Weyl algebra $\A_1$ over an arbitrary associative (but possible non-commutative) $\FF$-algebra $\B$ is PI-equivalent to the algebra $M_p(\B)$ of all $p\times p$ matrices over $\B$ (see Theorem~4.9  of~\cite{Lopatin_Rodriguez_2022} for more general result). 

Over a field of an arbitrary characteristic, minimal polynomials identities for 
\begin{enumerate}
\item[$\bullet$] $\A_1^{(-,1)}$ for an arbitrary $p$,
\item[$\bullet$] $\A_1^{(-,s)}$ for $p=2$,  
\end{enumerate}
were described in~\cite{Lopatin_Rodriguez_II}. Moreover, similar result was obtained in~\cite{Lopatin_Rodriguez_II} for the so-called parametric Weyl algebras, which were introduced and studied by Benkart, Lopes, Ondrus~\cite{Benkart_Lopes_Ondrus_II, Benkart_Lopes_Ondrus_I,  Benkart_Lopes_Ondrus_III}.

\subsection{Results} In this paper we described all polynomial identities for $\A_1^{(-,1)}$ of degree 4. Namely, polynomial identities of multidegree
\begin{enumerate}
\item[$\bullet$] (3,1) were considered in Proposition~\ref{prop_deg31}; 

\item[$\bullet$] (2,2) were considered in Proposition~\ref{prop_deg22}; 

\item[$\bullet$] (2,1,1) were considered in Proposition~\ref{prop_id211};

\item[$\bullet$] (1,1,1,1) were considered in Propositions~\ref{prop_id1111} and~\ref{prop_id1111_p2}.
\end{enumerate}%
\noindent{}It is clear that there is no polynomial identities of multidegree $(4)$. See~\cite{comp} for the computer program for Wolfram Mathematica to assist the proofs of  Lemma~\ref{lemma_deg22_id},~\ref{lemma_deg1111_id} and Propositions~\ref{prop_id1111},~\ref{prop_id1111_p2}.

\section{Auxiliary notions}  

\subsection{Properties of $\A_1$}\label{section_A1}

Given $a\in \FF[x]$, we write $a'$ for the usual derivative of the polynomial $a$ with respect to the variable $x$. Using the linearity of derivative and induction on the degree of $a\in\FF[x]$ it is easy to see that
\begin{eq}\label{eq0}
[y,a]=a' \text{ holds in }\A_1 \text{ for all }a\in \FF[x].
\end{eq}%
Thus for all $i,j\geq0$ the associative multiplication and the Lie bracket on $\A_1^{(-,1)}$ are given by
\begin{eq}\label{eq1}
x^i y\, x^j y = x^{i+j}y^2 + j x^{i+j-1}y \;\text{ and }\; [x^i y,x^j y] = (j-i)x^{i+j-1}y,
\end{eq}%
where we use notation that
$$x^i=0 \text{ in }\A_1 \text{ in case }i\in\ZZ \text{ is negative}.$$
The following properties are well-known (for example, see~\cite{Benkart_Lopes_Ondrus_I}):

\begin{propArt}\label{prop_basis1}  
\begin{enumerate}
\item[(a)] $\{x^{i}y^{j}\tq i,j\geq 0\}$ and $\{y^{j}x^{i}\tq i,j\geq 0\}$ are $\FF$-bases for $\A_1$. In particular,  $\{x^{i}y\tq i\geq 0\}$ is an $\FF$-basis for $\A_1^{(-,1)}$.

\item[(b)] If $p=0$, then the center ${\rm Z}(\A_1)$ of $\A_1$ is $\FF$; if $p>0$, then ${\rm Z}(\A_1)=\FF[x^{p},y^{p}]$.

\item[(c)] If $p>0$, then $\A_1$ is a free module over ${\rm Z}(\A_1)$ and the set $\{x^{i}y^{j} \tq 0\leq i,j<p\}$ is a basis.

\item[(d)] The algebra $\A_1$ is simple if and only if $p=0$.
\end{enumerate}
\end{propArt}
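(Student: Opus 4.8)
The plan is to verify each of the four assertions in Proposition~\ref{prop_basis1} using the defining relation $yx = xy + 1$ and elementary combinatorics, since all of these are classical facts about the Weyl algebra that can be established directly.

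\textbf{Part (a).} First I would show that $\{x^i y^j\}$ spans $\A_1$: starting from an arbitrary word in $x,y$, I repeatedly apply $yx = xy+1$ to move every $y$ to the right of every $x$, which terminates because each application strictly decreases the number of inversions (occurrences of $y$ to the left of $x$). For linear independence, the cleanest route is to let $\A_1$ act on the polynomial ring $\FF[t]$ by $x \mapsto (\text{multiplication by } t)$ and $y \mapsto d/dt$; one checks $yx - xy - 1$ acts as zero, so this gives a well-defined representation, and then $x^i y^j$ sends $t^n$ to $\tfrac{n!}{(n-j)!} t^{n-j+i}$ (with the convention that this is $0$ when $n<j$). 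A finite dependence $\sum \la_{ij} x^i y^j = 0$ evaluated on a sufficiently high power $t^n$ forces all $\la_{ij}=0$ by comparing degrees and leading behavior; here one must be slightly careful in characteristic $p$ because $n!/(n-j)!$ can vanish, but choosing $n$ large and working degree by degree in the output still separates the terms. The symmetric statement for $\{y^j x^i\}$ follows by the same argument (or by the anti-automorphism $x \leftrightarrow y$, $1 \mapsto -1$... actually by a direct rewriting moving $x$'s to the right). The basis for $\A_1^{(-,1)}$ is then immediate from the definition of $\A_1^{(-,s)}$ with $s=1$.

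\textbf{Parts (b), (c), (d).} For (b), when $p=0$, a general element $\sum \la_{ij} x^i y^j$ commuting with $y$ forces (via $[y, x^i y^j] = i x^{i-1} y^j$, which follows from~\eqref{eq0}) that $\la_{ij} = 0$ whenever $i \neq 0$, and commuting with $x$ similarly kills the terms with $j \neq 0$, leaving only scalars. When $p>0$ the same commutator computations show $x^p$ and $y^p$ are central (since $[y,x^p] = p x^{p-1} = 0$ and, by the mirror computation, $[x, y^p]=0$), hence $\FF[x^p,y^p] \subseteq {\rm Z}(\A_1)$; conversely, writing an arbitrary central element in the basis of part (a) and imposing $[x,-]=0$ and $[y,-]=0$ forces all exponents of $x$ and $y$ appearing to be divisible by $p$. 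Part (c) follows by combining (a) and (b): every $x^i y^j$ can be written as $x^{p\lfloor i/p\rfloor} y^{p\lfloor j/p \rfloor} \cdot x^{i \bmod p} y^{j \bmod p}$... but wait, the central factor $x^{p\lfloor i/p\rfloor}$ and $y^{p\lfloor j/p\rfloor}$ need not multiply in that order with the small part cleanly — since $x^{p\lfloor i/p\rfloor}$ is central it commutes past everything, so $x^i y^j = (x^{pa} y^{pb})\, x^{i'} y^{j'}$ with $0 \le i',j' < p$; this shows the $p^2$ elements $\{x^{i'}y^{j'} : 0 \le i',j' < p\}$ generate $\A_1$ over ${\rm Z}(\A_1)$, and their ${\rm Z}(\A_1)$-independence follows from the $\FF$-basis property in (a). Finally (d): simplicity in characteristic $0$ is standard (any nonzero ideal, hit with $[y,-]$ and $[x,-]$ enough times, produces a nonzero scalar), while for $p>0$ the ideal generated by $x^p$ is proper, so $\A_1$ is not simple.

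\textbf{Main obstacle.} The only genuinely delicate point is the linear independence in part (a) over a field of positive characteristic: one cannot divide by $n!$, so the representation-on-$\FF[t]$ argument must be run carefully, isolating coefficients by looking at the action on several basis monomials $t^n$ simultaneously (or, equivalently, by passing to $\ZZ$-forms and reducing mod $p$). Everything else is a routine but somewhat lengthy bookkeeping exercise with~\eqref{eq0}, \eqref{eq1} and the commutation relation; since the proposition is quoted as well-known (with a reference to~\cite{Benkart_Lopes_Ondrus_I}), I would likely just cite that source for the details rather than reproduce the full verification.
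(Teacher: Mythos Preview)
Your proposal is mathematically sound, but there is nothing to compare it to: the paper does not give its own proof of Proposition~\ref{prop_basis1}. It is stated as a well-known fact with the parenthetical ``for example, see~\cite{Benkart_Lopes_Ondrus_I}'' and no argument is supplied. You in fact anticipated this in your final paragraph, and that instinct is correct --- the appropriate move here is simply to cite the reference rather than reproduce the standard verification.
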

\medskip

Theorem 5.4 of~\cite{Lopatin_Rodriguez_II} implies the following statement:

\begin{propArt}\label{theo_PI_min} 
\begin{enumerate}
	\item[(a)] A minimal polynomial identity for $\A_1^{(-,1)}$ has degree $3$.
	
	\item[(b)] Every homogeneous polynomial identity for  $\A_1^{(-,1)}$  of degree $3$ is equal to $\xi\, \St_3$ for some $\xi\in\FF$.
\end{enumerate}
\end{propArt}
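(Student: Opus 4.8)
The plan is to treat both parts with one device: substitute the basis elements $x^{i}y$ ($i\ge 0$) of $\A_1^{(-,1)}$ (Proposition~\ref{prop_basis1}(a)) into a candidate identity, and use that over an infinite field every homogeneous polynomial identity splits into multihomogeneous components, each of which is again an identity. The computational engine is the closed formula, obtained from~(\ref{eq0}) and~(\ref{eq1}) by moving the powers of $y$ to the right of $x^{c}$ and multiplying out,
$$(x^{a}y)(x^{b}y)(x^{c}y)=x^{a+b+c}y^{3}+(b+2c)\,x^{a+b+c-1}y^{2}+c(b+c-1)\,x^{a+b+c-2}y,$$
with the convention $x^{i}=0$ for $i<0$; the three summands involve pairwise distinct basis monomials of $\A_1$, so they may be annihilated separately whenever they are nonzero.

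For part~(a), a degree-$1$ identity $\al x_{1}$ forces $\al=0$, and in degree~$2$ it suffices to treat multidegrees $(2)$ and $(1,1)$: substituting $x^{u}y$ into $\al x_{1}^{2}$ kills $\al$ via the coefficient of $y^{2}$, while substituting $(x^{u}y,x^{v}y)$ into $\al x_{1}x_{2}+\be x_{2}x_{1}$ gives $\al+\be=0$ and then $\al v+\be u=\al(v-u)=0$, so $\al=\be=0$. Hence $\A_1^{(-,1)}$ has no nonzero identity of degree $\le 2$. Conversely $\St_{3}$ is an identity: substituting $x^{a_{i}}y$, the coefficient of $x^{\cdot}y^{3}$ is $\sum_{\si\in S_{3}}(-1)^{\si}=0$, and the coefficients of $x^{\cdot}y^{2}$ and $x^{\cdot}y$, expanded in the $a_{i}$, are alternating sums over $S_{3}$ in which every monomial in the $a_{i}$ that occurs is fixed by some transposition of slots that reverses $(-1)^{\si}$, hence vanish. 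So the minimal degree of an identity is exactly~$3$.

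For part~(b), write a homogeneous degree-$3$ identity $f$ as a sum of multihomogeneous components; each is an identity of multidegree $(3)$, $(2,1)$ or $(1,1,1)$. For type $(3)$, $f=\al x_{1}^{3}$ and the coefficient of $y^{3}$ gives $\al=0$. For type $(2,1)$, with $f=\al x_{1}^{2}x_{2}+\be x_{1}x_{2}x_{1}+\ga x_{2}x_{1}^{2}$, substituting $x^{u}y$ for $x_{1}$ and $x^{v}y$ for $x_{2}$, the $y^{3}$- and $y^{2}$-coefficients yield $\al+\be+\ga=0$ together with two linear relations forcing $\al=\ga$ and $\be=-2\ga$, after which the $x^{\cdot}y$-coefficient at $v=0$ equals $\ga u$, so $\al=\be=\ga=0$. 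For type $(1,1,1)$, $f=\sum_{\si\in S_{3}}c_{\si}x_{\si(1)}x_{\si(2)}x_{\si(3)}$ is multilinear, hence an identity iff $f(x^{a_{1}}y,x^{a_{2}}y,x^{a_{3}}y)=0$ for all $a_{i}\ge 0$; annihilating the coefficients of $y^{3}$, $y^{2}$ and $y$ (reading off the $a_{i}$-coefficients of the first two and evaluating at $(1,1,0),(1,0,1),(0,1,1)$ and $(2,0,0),(0,2,0),(0,0,2)$ for the third) gives a linear system in the six $c_{\si}$ whose solution space is one-dimensional and spanned by the coefficient vector of $\St_{3}$. Together with $\St_{3}\in\Id{\A_1^{(-,1)}}$ from part~(a), this gives $f=\xi\,\St_{3}$. (Alternatively, both statements follow from Theorem~5.4 of~\cite{Lopatin_Rodriguez_II}.)

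The step I expect to be the main obstacle is the type-$(1,1,1)$ system: one has to check that the chosen substitutions really yield enough independent relations to cut the solution space down to dimension~$1$, and that extracting relations from the $x^{\cdot}y$-component — whose coefficient $c(b+c-1)$ is quadratic in the exponents — stays valid in characteristics $2$ and $3$, where $a(a-1)$ is identically zero on the prime field and a naive comparison of polynomial coefficients loses information. Evaluating at the small integer tuples above rather than formally equating coefficients repairs this uniformly, at the cost of a short case split on $p$; in particular, when $p=2$ the evaluations at $(2,0,0),(0,2,0),(0,0,2)$ become vacuous and are replaced by the relations read off from the coefficient of $y^{2}$.
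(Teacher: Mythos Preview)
Your proposal is correct, but it takes a genuinely different route from the paper. The paper does not prove Proposition~\ref{theo_PI_min} at all: it simply records that the statement follows from Theorem~5.4 of~\cite{Lopatin_Rodriguez_II}. You, by contrast, give a direct, self-contained argument by substituting the basis elements $x^{i}y$ and reading off coefficients in the $\{x^{r}y^{t}\}$-basis of $\A_1$---exactly the technique the paper itself uses later (cf.\ Lemma~\ref{lemma_decomposition} and the proofs of Propositions~\ref{prop_deg31} and~\ref{prop_deg22}) in degree~$4$. Your triple-product formula is the degree-$3$ analogue of Lemma~\ref{lemma_decomposition}, and your case analysis on the multidegree mirrors the paper's organisation of Section~\ref{section_deg4}. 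The alternative you mention in parentheses is precisely the paper's route.

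A couple of minor remarks on the execution, none of them gaps. In the $(2,1)$ case your claimed value ``$\ga u$'' for the $x^{\cdot}y$-coefficient at $v=0$ is indeed correct in every characteristic once one has $\al=\ga$ and $\be=-2\ga$: the expression $-2\ga u(u-1)+\ga u(2u-1)$ simplifies to $\ga u$ over $\ZZ$, so the specialisation $u=1$ kills $\ga$ uniformly. In the multilinear case your characteristic-$2$ repair is exactly right: the three relations from the $y^{2}$-coefficient (namely $c_{213}+c_{312}=c_{123}+c_{321}=c_{132}+c_{231}=0$ after setting $2=0$) together with the three from $(1,1,0),(1,0,1),(0,1,1)$ force all six $c_{\si}$ to be equal, which is the $\St_3$-line since $\pm1=1$ in $\FF$. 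Your caution about characteristic~$3$ is unnecessary there---the evaluations at $(2,0,0)$ etc.\ still give $2(c_{231}+c_{321})=0$ with $2$ invertible---but it does no harm. What your approach buys over the bare citation is that the reader sees the mechanism that drives the whole paper already at degree~$3$.
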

\medskip

\subsection{Partial linearizations}

Assume $f\in \FX$ is multihomogeneous of multidegree $\un{\de}\in\NN^m$. Given $1\leq i\leq m$ and $\un{\ga}\in\NN^k$ for some $k>0$ with $|\un{\ga}|=\de_i$, the {\it partial linearization} $\lin_{x_i}^{\un{\ga}}(f)$ of $f$ of multidegree $\un{\ga}$ with respect to $x_i$ is the multihomogeneous component of 
$$f(x_1,\ldots,x_{i-1},x_{i}+\cdots+x_{i+k-1},x_{i+k},\ldots,x_{m+k-1})$$ 
of multidegree $(\de_1,\ldots,\de_{i-1},\ga_{1},\ldots,\ga_{k},\de_{i+1},\ldots,\de_{m})$. As an example, $$\rm{lin}_{x_2}^{(1,1)}(x_1x_2^2x_3^3) = x_1 (x_2 x_3 + x_3 x_2)x_4^3.$$
The result of subsequent applications of partial linearizations to $f$ is also called a partial linearization of $f$. The {\it complete linearization} $\lin(f)$ of $f$ is the result of subsequent applications of $\lin_{x_1}^{1^{\de_1}},\ldots, \lin_{x_m}^{1^{\de_m}}$ to $f$, where $1^k$ stands for $(1,\ldots,1)$ ($k$ times). 

Assume $\algA$  is a unital $\FF$-algebra  and  $\algV\subset \algA$ is an $\FF$-subspace. 
Since $\FF$ is infinite, it is well-known that if $f$ is a polynomial identity for $\algV$, then all partial linearizations of $f$ are also polynomial identities for $\algV$. Note that the above claim does not hold in general for a finite field (as an example, see~\cite{Lopatin_Shestakov_2013} for the case of $f(x_1)=x_1^n$ and $\algV=\algA$). The following lemma is well-known.

\begin{lemmaArt}\label{lemma_id} Assume that all partial linearizations of a multihomogeneous element  $f$ of  $\FX$ are equal to zero over some basis of $\algV$. Then $f$ is a polynomial identity for $\algV$. 
\end{lemmaArt}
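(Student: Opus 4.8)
The plan is to reduce the claim "$f$ is a polynomial identity for $\algV$" to the hypothesis "all partial linearizations of $f$ vanish on a fixed basis of $\algV$" by a standard two-step argument: first pass from an arbitrary basis evaluation to an arbitrary evaluation via multilinearity of the complete linearization, and then descend from the complete linearization back to $f$ itself using that $\FF$ is infinite.

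First I would fix a basis $\{e_\alpha\}$ of $\algV$ and let $f = f(x_1,\ldots,x_m)$ be multihomogeneous of multidegree $\un{\de} = (\de_1,\ldots,\de_m)$. Consider the complete linearization $g = \lin(f)$, a multilinear element of $\FF\LA x_1,\ldots,x_{|\un{\de}|}\RA$ obtained by applying $\lin_{x_1}^{1^{\de_1}},\ldots,\lin_{x_m}^{1^{\de_m}}$ in turn; by hypothesis $g$, being a partial linearization of $f$, vanishes whenever its variables are specialized to basis elements $e_\alpha$. Because $g$ is multilinear, substituting into each of its variables an arbitrary $\FF$-linear combination of the $e_\alpha$ and expanding yields an $\FF$-linear combination of values of $g$ on tuples of basis elements, each of which is $0$; hence $g(v_1,\ldots,v_{|\un{\de}|}) = 0$ for all $v_1,\ldots,v_{|\un{\de}|}\in\algV$, i.e. $g\in\Id{\algV}$.

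Next I would recover $f$ from $g$. The key point is that the complete linearization is obtained from $f$ by a sequence of partial linearization steps, and conversely each such step is reversible up to a nonzero scalar when $\FF$ is infinite. Concretely, for a single variable $x_i$ of degree $\de_i$, the iterated linearization $\lin_{x_i}^{1^{\de_i}}(h)$, followed by the "diagonal" substitution identifying the $\de_i$ new variables back into a single variable, returns $\de_i!\cdot h$ in characteristic $0$; in positive characteristic one instead extracts $f$ as a suitable multihomogeneous component after substituting $x_i \mapsto \sum_{j} t_j x_{i,j}$ with independent scalars $t_j$ and using that, over an infinite field, a polynomial in the $t_j$ that vanishes identically has all coefficients zero — this is exactly the standard fact (cited just above in the excerpt) that partial linearizations of an identity are identities, applied in the "inverse" direction via the Vandermonde/interpolation argument. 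Iterating this over $x_1,\ldots,x_m$ shows that $f$ lies in the $\FF$-span of evaluations/specializations of $g$ and its partial linearizations, all of which are in $\Id{\algV}$, so $f\in\Id{\algV}$.

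The main obstacle is the characteristic-$p$ bookkeeping in the second step: one cannot simply divide by $\de_i!$, so the recovery of $f$ from $\lin(f)$ must be phrased as extracting a multihomogeneous component after a generic scalar substitution, and one must check that this component is indeed a (partial-linearization-and-specialization) combination of things already known to vanish on $\algV$. Since $\FF$ is infinite this is routine — it is precisely the mechanism behind the remark preceding the lemma that partial linearizations of identities are identities — but it is the step that genuinely uses the infiniteness of $\FF$ and is the only place where care is needed; everything else is multilinear expansion.
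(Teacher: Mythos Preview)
Your second step has a genuine gap. You correctly show that the complete linearization $g=\lin(f)$ lies in $\Id{\algV}$, but from $g\in\Id{\algV}$ alone one \emph{cannot} recover $f\in\Id{\algV}$ in positive characteristic. The paper's own Example~2.4 is precisely this phenomenon: with $p=2$ and $f(x_1)=x_1^2$, the complete linearization $f_{11}(x_1,x_2)=x_1x_2+x_2x_1$ is an identity for the whole commutative algebra $\algA$, yet $f$ is not. No specialization of $g$ and no ``multihomogeneous component after a generic scalar substitution into $g$'' will ever produce $f$: every such operation yields an $\FF$-linear combination of values of $g$, and identifying variables in $g$ only returns $\de_i!\cdot f$, which may vanish. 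The ``inverse to linearization'' you invoke simply does not exist when $p\mid \de_i!$, so the appeal to the Vandermonde/interpolation mechanism is in the wrong direction here.

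The paper's proof avoids this detour and is a single line: for arbitrary $a_i=\sum_j \al_{ij} v_j\in\algV$, expanding $f(a_1,\ldots,a_m)$ by multihomogeneity gives directly an $\FF$-linear combination of \emph{all} partial linearizations of $f$ (not just the complete one) evaluated at tuples of basis vectors $v_j$, and each such term is zero by hypothesis. This uses the full strength of the assumption, which your route effectively discards after the first step by passing to $g$ alone. Note also that this direct expansion does not require $\FF$ to be infinite; infiniteness of $\FF$ is needed for the converse implication (an identity forces its partial linearizations to be identities), not for the present lemma.
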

\begin{proof}
Let $\{v_j\}$ be a basis for $\algV$. Note that $f(\sum_j \al_{1j} v_j, \ldots, \sum_j \al_{mj} v_j)$ is a linear combination of partial linearizations of $f$ evaluated on the basis $\{v_j\}$, where $\al_{1j},\ldots,\al_{mj}\in\FF$ for all $j$. Therefore, the required is proven. 
\end{proof}

\begin{exampleArt} Assume that $p=2$ and $\algA$ is the unital associative commutative algebra generated by $e_1,\ldots,e_n$ with the ideal of relations generated by $e_1^2,\ldots,e_n^2$, where $n>0$. Denote by $\algV$ the maximal ideal of $\algA$ generated by $e_1,\ldots,e_n$. Let us apply Lemma~\ref{lemma_id} to show that $f(x_1)=x_1^2$ is the polynomial identity for $\algV$. 

All partial linearizations of $f(x_1)$ are $f(x_1)$ and $f_{11}(x_1,x_2)=x_1x_2 + x_2x_1$. Since $B=\{e_{i_1}\cdots e_{i_k}\,|\,1\leq i_1<\cdots<i_k\leq n,\; k\geq1\}$ is a basis for $\algV$ and 
$$f(e_{i_1}\cdots e_{i_k})=f_{11}(e_{i_1}\cdots e_{i_k},e_{j_1}\cdots e_{j_r})=0$$ 
in $\algA$ for all $1\leq i_1<\cdots<i_k\leq n$, $1\leq j_1<\cdots<j_r\leq n$ with $k,r>0$, we obtain that $f(x_1)\in\Id{\algV}$ by Lemma~\ref{lemma_id}. Note that $f(x_1)$ is not a polynomial identity for $\algA$, since $f(1)=1$, but $f_{11}\in\Id{\algA}$. 
\end{exampleArt}

\section{Non-multilinear identities for $\A_1^{(-,1)}$ of degree four}\label{section_deg4} 

In this section we will denote $c_i=x^i y$ for $i\geq0$. Note that in the presentation $c_i c_j c_k c_l=\sum_{r,t\geq0}\be_{rt} x^r y^t$ in $\A_1$ the coefficient $\be_{rt}\in\FF$ is unique for all $r,t\geq0$ by part (a) of Proposition~\ref{prop_basis1}.

Consider the following multihomogeneous elements of $\FF\LA X\RA$ of degree $4$:
$$\Phi_{22}= x_1^2 x_2^2 - 3 x_1 x_2 x_1 x_2 + 2 x_1 x_2^2 x_1 + 2 x_2 x_1^2 x_2 - 3 x_2 x_1 x_2 x_1 + x_2^2 x_1^2,$$
$$\Psi = x_2[x_1,x_4]x_3 + x_3[x_1,x_4]x_2,$$
$$\Psi_{211} = \Psi(x_1,x_1,x_3,x_2) = x_1[x_1,x_2]x_3 + x_3[x_1,x_2]x_1 .$$
Denote  $\Phi_{211} =  \rm{lin}_{x_2}^{(1,1)}\Phi_{22}$, $\Phi_{\rm lin} =  \rm{lin } (\Phi_{22})$ and  $$\Psi_{\rm lin} =  \rm{lin} (\Psi_{211}) = \Psi(x_1,x_2,x_4,x_3) + \Psi(x_2,x_1,x_4,x_3).$$

\begin{lemmaArt}\label{lemma_decomposition}
Given  $i,j,k,l\geq0$, we denote $e=i+j+k+l$. Then in the presentation of $c_i c_j c_k c_l$ as the linear combination of basis elements $\{x^r y^t\,|\,r,t\geq0\}$ of $\A_1$ the coefficient of
\begin{enumerate}
\item[(a)] $x^e y^4$ is $1$;

\item[(b)] $x^{e-1} y^3$ is $j+2k+3l$, in case $e\geq1$;

\item[(c)] $x^{e-2} y^2$ is $(k+2l)(j+k+l-1) + l(k+l-1)$, in case $e\geq2$;

\item[(d)] $x^{e-3}y$ is $l(k+l-1)(j+k+l-2)$, in case $e\geq3$.
\end{enumerate}
The remaining coefficients are zeros. Moreover, we may apply parts (b), (c), (d) for every $e\geq0$, since in case of negative degree of $x$ the corresponding coefficient is zero.    
\end{lemmaArt}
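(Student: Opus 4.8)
The plan is to compute $c_ic_jc_kc_l$ directly, pushing each $y$ to the right past the powers of $x$ that follow it. Beyond relation~(\ref{eq1}), the only tool needed is the commutation rule obtained by iterating $[y,x^{m}]=mx^{m-1}$ from~(\ref{eq0}) (recall $x^{r}=0$ for $r<0$); in the cases actually used this reads
\[
yx^{m}=x^{m}y+mx^{m-1},\qquad y^{2}x^{m}=x^{m}y^{2}+2mx^{m-1}y+m(m-1)x^{m-2},
\]
\[
y^{3}x^{m}=x^{m}y^{3}+3mx^{m-1}y^{2}+3m(m-1)x^{m-2}y+m(m-1)(m-2)x^{m-3}.
\]
Before computing I would record what constrains the outcome. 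In the $\ZZ$-grading of $\A_{1}$ with $\deg x=1$ and $\deg y=-1$ (legitimate since $yx-xy-1$ is homogeneous of degree $0$), the element $c_{i}=x^{i}y$ is homogeneous of degree $i-1$, so $c_ic_jc_kc_l$ is homogeneous of degree $e-4$ and only basis monomials $x^{r}y^{t}$ with $r-t=e-4$ can occur in its expansion. Moreover the $y$-degree $\deg_{y}\bigl(\sum\beta_{rt}x^{r}y^{t}\bigr)=\max\{t:\beta_{rt}\ne0\}$ is submultiplicative (immediate from the rule above), and $\A_{1}^{(-,1)}=\FF[x]y\subseteq\A_{1}y$, so $c_ic_jc_kc_l\in\A_{1}y$ has $y$-degree between $1$ and $4$. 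Hence the only monomials that can appear are $x^{e-4+t}y^{t}$ with $t\in\{1,2,3,4\}$, which is precisely the list in (a)--(d); in particular the vanishing of the remaining coefficients will be evident from the computation, and the convention $x^{r}=0$ for $r<0$ absorbs the degenerate ranges $e<3$.

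The computation is then three successive right-multiplications. From~(\ref{eq1}), $c_ic_j=x^{i+j}y^{2}+jx^{i+j-1}y$. Multiplying on the right by $c_{k}=x^{k}y$ and applying the rule to $y^{2}x^{k}$ and $yx^{k}$ gives, after collecting,
\[
c_ic_jc_k=x^{i+j+k}y^{3}+(j+2k)x^{i+j+k-1}y^{2}+k(j+k-1)x^{i+j+k-2}y.
\]
Multiplying once more on the right by $c_{l}=x^{l}y$ and applying the rule to $y^{3}x^{l}$, $y^{2}x^{l}$ and $yx^{l}$, one reads off the coefficient of $x^{e-4+t}y^{t}$ for $t=4,3,2,1$: that of $x^{e}y^{4}$ is $1$, and that of $x^{e-1}y^{3}$ is $3l+(j+2k)=j+2k+3l$, while the coefficients of $x^{e-2}y^{2}$ and $x^{e-3}y$ come out as explicit polynomials in $j,k,l$. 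It then remains to check that these equal the factored expressions $(k+2l)(j+k+l-1)+l(k+l-1)$ and $l(k+l-1)(j+k+l-2)$ respectively --- a routine expansion, which I expect to be the only real obstacle.

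As a conceptual explanation and an independent check, note that the claimed identity has integer coefficients, so it suffices to prove it for $\A_{1}$ over $\QQ$, where the polynomial representation $x\mapsto(u\,\cdot\,)$, $y\mapsto d/du$ on $\QQ[u]$ is faithful. A one-line computation gives
\[
c_ic_jc_kc_l\cdot u^{N}=N(N+l-1)(N+l+k-2)(N+l+k+j-3)\,u^{\,N+e-4}\qquad(N\in\NN),
\]
while $x^{e-4+t}y^{t}\cdot u^{N}=(N)_{t}\,u^{\,N+e-4}$ with $(N)_{t}:=N(N-1)\cdots(N-t+1)$. Hence the coefficient of $x^{e-4+t}y^{t}$ in $c_ic_jc_kc_l$ is exactly the coefficient of the falling factorial $(N)_{t}$ in the expansion of $P(N):=N(N+l-1)(N+l+k-2)(N+l+k+j-3)$: its leading coefficient gives~(a); $P(1)=l(k+l-1)(j+k+l-2)$ gives~(d); matching the coefficient of $N^{3}$ gives~(b); and matching the remaining coefficients gives~(c). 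Either way the statement is reduced to elementary bookkeeping, which is where all the work lies.
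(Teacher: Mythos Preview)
Your primary approach is correct and is essentially the paper's own: both compute $c_ic_jc_kc_l$ directly by pushing $y$'s to the right through powers of $x$, the only cosmetic difference being that you proceed left-to-right (first $c_ic_j$, then $\cdot\,c_k$, then $\cdot\,c_l$) while the paper first computes $c_jc_kc_l$ and then left-multiplies by $c_i$. Your additions --- the $\ZZ$-grading argument explaining a priori which monomials $x^{r}y^{t}$ can occur, and the alternative proof over $\QQ$ via the polynomial representation, where the coefficients are read off as the falling-factorial coordinates of $P(N)=N(N+l-1)(N+l+k-2)(N+l+k+j-3)$ --- are not in the paper; the second of these is a genuinely different route that gives a clean independent check and a conceptual source for the closed forms in (a)--(d).
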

\begin{proof} Assume $i,j,k,l\geq1$. We apply equality~(\ref{eq0}) to obtain that
$$\begin{array}{rcl}
c_jc_kc_l & = & x^j y x^k (x^l y + l x^{l-1}) y \\
& = &  x^j( y x^{k+l}) y^2 + l x^j ( y x^{k+l-1})y\\
& = &  x^{j+k+l} y^3 + (k+2l) x^{j+k+l-1} y^2 +  l (k+l-1) x^{j+k+l-2} y \;\;\;\text{ in }\;\;\A_1.
\end{array}
$$
Applying the obtained formula to $c_i c_j c_k c_l$ we  conclude the proof.  Note that in these calculations $x^k$ with negative $k\in\ZZ$ always has zero coefficient. Then these calculation are valid for $i,j,k,l\geq0$ and the required is proven.
\end{proof}

 \begin{propArt}\label{prop_deg31} There is no non-trivial multihomogeneous polynomial identities for $\A_1^{(-,1)}$ of multidegree $(3,1)$.
\end{propArt}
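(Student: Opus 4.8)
The plan is to parametrize the candidate identity and then reduce everything to a tiny piece of linear algebra. The multihomogeneous elements of $\FF\LA x_1,x_2\RA$ of multidegree $(3,1)$ form a $4$-dimensional space, spanned by the four monomials $x_1^3x_2$, $x_1^2x_2x_1$, $x_1x_2x_1^2$, $x_2x_1^3$; so I would write an arbitrary such element as
$$f = \al_0\,x_1^3x_2 + \al_1\,x_1^2x_2x_1 + \al_2\,x_1x_2x_1^2 + \al_3\,x_2x_1^3,\qquad \al_0,\al_1,\al_2,\al_3\in\FF,$$
and the claim to be proved is that $f\in\Id{\A_1^{(-,1)}}$ forces $\al_0=\al_1=\al_2=\al_3=0$.

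The key observation is that one need not test $f$ on a general pair of elements of $\A_1^{(-,1)}$: since $\{c_i=x^iy\tq i\ge0\}$ is an $\FF$-basis of $\A_1^{(-,1)}$ by Proposition~\ref{prop_basis1}(a), a polynomial identity must in particular vanish at the single pair $(x_1,x_2)=(c_1,c_0)$. So I would compute $f(c_1,c_0)$ in $\A_1$: it is the combination with coefficients $\al_0,\al_1,\al_2,\al_3$ of the four products $c_1c_1c_1c_0$, $c_1c_1c_0c_1$, $c_1c_0c_1c_1$, $c_0c_1c_1c_1$, and Lemma~\ref{lemma_decomposition} (applied with $e=3$) expands each of these explicitly in the basis $\{x^ry^t\tq r,t\ge0\}$ of $\A_1$ (Proposition~\ref{prop_basis1}(a)); only the monomials $x^3y^4$, $x^2y^3$, $xy^2$, $y$ appear. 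Reading off parts (a)--(d) of Lemma~\ref{lemma_decomposition}, the vanishing of $f(c_1,c_0)$ is equivalent to the homogeneous linear system whose rows, in the order of the coefficients of $x^3y^4,\,x^2y^3,\,xy^2,\,y$, are
$$(1,1,1,1),\quad (3,4,5,6),\quad (1,2,4,7),\quad (0,0,0,1)$$
applied to the vector $(\al_0,\al_1,\al_2,\al_3)$.

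The only thing requiring a little care --- the main obstacle, though it is a mild one --- is that the conclusion has to hold over a field of \emph{arbitrary} characteristic, so I cannot simply say that this $4\times4$ system has full rank over $\QQ$. Instead I would note that the coefficient matrix has integer entries and determinant $1$ (expand along the last row, then compute a $3\times3$ determinant), hence is invertible over $\FF$ no matter what $\Char\FF$ is; therefore $\al_0=\al_1=\al_2=\al_3=0$ and $f=0$. Everything else is the routine evaluation of $c_1c_1c_1c_0$, $c_1c_1c_0c_1$, $c_1c_0c_1c_1$, $c_0c_1c_1c_1$ that Lemma~\ref{lemma_decomposition} was designed to deliver, and no evaluation beyond the single pair $(c_1,c_0)$ is needed.
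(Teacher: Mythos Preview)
Your proof is correct and, in fact, cleaner than the paper's. The paper proceeds by evaluating $f$ at general $(c_i,c_j)$ with $i,j\ge 1$, which via Lemma~\ref{lemma_decomposition} produces four families of linear equations with coefficients polynomial in $i,j$; it then plugs in several specific values of $(i,j)$, and finishes with a case split between $p\neq 2$ and $p=2$. You bypass all of that by choosing a single evaluation point $(c_1,c_0)$ and observing that the resulting $4\times 4$ integer matrix has determinant $1$, hence is invertible in every characteristic at once. Both arguments rest on the same lemma, but yours converts the question into a single unimodularity check and avoids the characteristic-dependent bookkeeping entirely; the paper's approach, on the other hand, illustrates the general-evaluation technique it reuses in the later multidegrees, where a single substitution no longer suffices.
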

\begin{proof} Assume that $f(x_1,x_2) = \al_1 x_1^3 x_2 + \al_2 x_1^2 x_2 x_1 + \al_3 x_1 x_2 x_1^2 + \al_4 x_2 x_1^3$ is a polynomial identity for $\A_1^{(-,1)}$, where $\al_1,\ldots,\al_4\in\FF$. We will show that $f=0$ is the trivial identity. For all $i,j\geq1$ we have $f(c_i,c_j)=0$ in $\A_1$. Applying parts (a)--(d), respectively, of Lemma~\ref{lemma_decomposition}, we obtain that 
\begin{eq}\label{eqA}
\al_1+\al_2+\al_3+\al_4=0,
\end{eq}
\begin{eq}\label{eqB}
(3i+3j)\al_1 + (4i+2j)\al_2 + (5i+j)\al_3 + 6i\al_4 = 0,
\end{eq}
\begin{eq}\label{eqC}
\begin{array}{c}
(2i^2 + 6ij + 3j^2 - i - 3j)\al_1 + (5i^2 + 5ij + j^2 -3i - j)\al_2 +\\ 
(8i^2 + 3ij - 4i)\al_3 + (11i^2-4i)\al_4 = 0,\\
\end{array}
\end{eq}
\begin{eq}\label{eqD}
\begin{array}{c}
j(i+j-1)(2i+j-2)\al_1 + i(i+j-1)(2i+j-2)\al_2 + \\
i(2i-1)(2i+j-2)\al_3 + i(2i-1)(3i-2)\al_4 = 0,\\
\end{array}
\end{eq}%
respectively. We can rewrite formula~(\ref{eqB}) as 
\begin{eq}\label{eqB2}
(3\al_1+4\al_2+5\al_3+6\al_4)i + (3\al_1+ 2\al_2 +\al_3)j = 0.
\end{eq}%

We subtract equality~(\ref{eqB2}) with $i=j=1$ from equality~(\ref{eqB2}) with $i=1$, $j=2$ to obtain that  $3\al_1+ 2\al_2 +\al_3=0$.  Thus it follows from equality~(\ref{eqA}) that for an arbitrary $p$ we have $\al_3=-3\al_1 - 2\al_2$ and $\al_4=2\al_1+\al_2$.  Taking $i=1$, $j=2$ and $i=1$, $j=3$ in equality~(\ref{eqD}) we obtain that $\al_2=-4\al_1=0$. Thus $f=0$ in case $p\neq2$.

Assume $p=2$. We have $\al_2=\al_4=0$ and $\al_3=\al_1$. Considering $i=1$, $j=2$ in equality~(\ref{eqC}) we can see that $\al_1=0$. The required is proven.
\end{proof}

\begin{lemmaArt}\label{lemma_deg22_id} The elements $\Phi_{22}$, $\Phi_{211}$, $\Phi_{\rm lin}$ are polynomial identities for $\A_1^{(-,1)}$. In case $p=2$ the elements $\Psi_{211}$, $\Psi$, and $[[x_1,x_2],[x_3,x_4]]$ are  polynomial identities for $\A_1^{(-,1)}$.
\end{lemmaArt}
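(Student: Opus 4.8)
The plan is to verify each claimed identity by checking, via Lemma~\ref{lemma_id}, that all partial linearizations vanish on the basis $\{c_i = x^i y \tq i \geq 0\}$ of $\A_1^{(-,1)}$; the key computational engine throughout is Lemma~\ref{lemma_decomposition}, which gives the four coefficients of $c_i c_j c_k c_l$ in the PBW basis $\{x^r y^t\}$ explicitly as polynomials in $i,j,k,l$. Since $\Phi_{\rm lin}$, $\Phi_{211}$, and $\Psi_{\rm lin}$ are (by construction) partial linearizations of $\Phi_{22}$ and $\Psi_{211}$ respectively, and partial linearizations of identities are identities over an infinite field, it suffices to prove that $\Phi_{22}$, $\Psi_{211}$, $\Psi$, and $[[x_1,x_2],[x_3,x_4]]$ lie in $\Id{\A_1^{(-,1)}}$; the rest follow. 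In fact, because of Lemma~\ref{lemma_id}, proving $\Phi_{22} \in \Id{\A_1^{(-,1)}}$ only requires checking that $\Phi_{22}$ and all its partial linearizations ($\Phi_{211}$ and $\Phi_{\rm lin}$) vanish on basis elements — so these three statements are really one computation.

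For $\Phi_{22}$: I would substitute $x_1 = c_i$, $x_2 = c_j$ and expand each of the six monomials using Lemma~\ref{lemma_decomposition} with the appropriate permutation of the exponent tuple. For the monomial $x_1^2 x_2^2$ the tuple is $(i,i,j,j)$, for $x_1 x_2 x_1 x_2$ it is $(i,j,i,j)$, and so on; the total degree $e = 2i+2j$ is the same for all six, so the $x^e y^4$ terms cancel automatically since the signed coefficient sum $1 - 3 + 2 + 2 - 3 + 1 = 0$. For the $y^3$, $y^2$, and $y$ coefficients one collects the polynomial expressions from parts (b), (c), (d) of the lemma, weighted by the signs $1,-3,2,2,-3,1$, and checks the resulting polynomial in $i,j$ is identically zero (over $\ZZ$, hence over any $\FF$). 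This is a finite symbolic check — the computer program referenced in the introduction handles it — and once $\Phi_{22}$ is confirmed, evaluating its partial linearizations reduces to the same mechanism applied to $c_i, c_j, c_k$ (for $\Phi_{211}$) and $c_i, c_j, c_k, c_l$ (for $\Phi_{\rm lin}$).

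For the characteristic-$2$ identities, I would proceed similarly but exploit that $[x_i, x_j]$ substituted with $c_i, c_j$ gives, by~(\ref{eq1}), the single basis element $(j-i) c_{i+j-1}$. So $\Psi_{211}(c_i, c_j, c_k)$ becomes $(j-i)\big(c_i c_{i+j-1} c_k + c_k c_{i+j-1} c_i\big)$, and $\Psi(c_i, c_j, c_k, c_l)$ becomes $(l-i)\big(c_j c_{i+l-1} c_k + c_k c_{i+l-1} c_j\big)$ — these are now degree-$3$ expressions, so I apply the degree-$3$ version of the coefficient formula (the $c_j c_k c_l$ computation already carried out inside the proof of Lemma~\ref{lemma_decomposition}) and check the coefficients vanish mod $2$. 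For $[[x_1,x_2],[x_3,x_4]]$, substituting $c_i, c_j, c_k, c_l$ and using~(\ref{eq1}) twice collapses it to a scalar multiple of a single $c_m$, namely $\big((l-k)-(j-i)\big)$ times something of the form $[c_{i+j-1}, c_{k+l-1}]$ up to the bracket structure; one then checks the scalar is even or the bracket vanishes. I also need $\Psi_{\rm lin} = \Psi(x_1,x_2,x_4,x_3) + \Psi(x_2,x_1,x_4,x_3)$ to be an identity, which follows either as a partial linearization of $\Psi_{211}$ or directly from $\Psi$ being an identity by the substitution rules for identities of subspaces.

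The main obstacle is not conceptual but bookkeeping: correctly tracking which exponent tuple and which sign attaches to each of the (up to $24$) monomials after full linearization, and being confident that the polynomial identities in the parameters $i,j,k,l$ genuinely vanish — in particular over $\ZZ$ versus only mod $2$, since $\Phi_{22}$ is claimed in all characteristics while $\Psi$ and $[[x_1,x_2],[x_3,x_4]]$ are only mod $2$. The cleanest way to present this is to record the reduction of each generator under the substitution $x_i \mapsto c_{n_i}$ to a polynomial in the $n_i$ (using Lemma~\ref{lemma_decomposition}), then assert the resulting identity in $\FF[n_1, n_2, \dots]$, citing~\cite{comp} for the verification of the larger linearized cases rather than writing out the full expansion.
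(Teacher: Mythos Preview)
Your proposal is correct and follows essentially the same route as the paper: verify via Lemma~\ref{lemma_id} that each element and all its partial linearizations vanish on the basis $\{c_i\}$, using Lemma~\ref{lemma_decomposition} as the computational engine and citing the computer program for the symbolic checks. Your one added wrinkle --- collapsing the brackets via~(\ref{eq1}) before expanding in the $p=2$ cases --- is a mild simplification over the paper's uniform treatment, but the overall argument is the same.
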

\begin{proof}Using Lemma~\ref{lemma_decomposition} and straightforward calculations (by means of a computer program) we can see that $\Phi_{22}$ and its partial linearizations $\lin_{x_1}^{(1,1)}(\Phi_{22})$, $\Phi_{211}=\lin_{x_2}^{(1,1)}(\Phi_{22})$ and the complete linearization $\Phi_{\rm lin}=\lin(\Phi_{22})$ are equal to zero over the set $\{c_i\, |\, i\geq0\}$. Since $\{c_i\, |\, i\geq0\}$ is a basis of $\A_1^{(-,1)}$, Lemma~\ref{lemma_id} concludes the proof for $\Phi_{22}$, $\Phi_{211}$, $\Phi_{\rm lin}$. 

Assume $p=2$. Since $\Psi_{211}$, $\Psi$, $\Psi_{\rm lin}$, and $[[x_1,x_2],[x_3,x_4]]$ are zero over the set $\{c_i\, |\, i\geq0\}$, Lemma~\ref{lemma_id} concludes the proof for $\Psi_{211}$, $\Psi$, and $[[x_1,x_2],[x_3,x_4]]$. 

\end{proof}

\begin{propArt}\label{prop_deg22} Assume  $f$ is a multihomogeneous polynomial identity for $\A_1^{(-,1)}$ of multidegree $(2,2)$. Then $f=\al\, \Phi_{22}$ for some $\al\in \FF$.
\end{propArt}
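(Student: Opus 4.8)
The plan is to determine the multidegree-$(2,2)$ part of $\Id{\A_1^{(-,1)}}$ by a direct computation, using only the observation preceding Lemma~\ref{lemma_id}: since $\FF$ is infinite, if $f\in\Id{\A_1^{(-,1)}}$ then $f$ and all of its partial linearizations vanish on the basis $\{c_i\mid i\geq0\}$ of $\A_1^{(-,1)}$, where $c_i=x^iy$. Write a general multihomogeneous $f$ of multidegree $(2,2)$ as
$$f=\al_1\,x_1^2x_2^2+\al_2\,x_1x_2x_1x_2+\al_3\,x_1x_2^2x_1+\al_4\,x_2x_1^2x_2+\al_5\,x_2x_1x_2x_1+\al_6\,x_2^2x_1^2,\qquad\al_1,\dots,\al_6\in\FF .$$
By Lemma~\ref{lemma_deg22_id} the element $\Phi_{22}$, whose coordinate vector in this list of monomials is $(1,-3,2,2,-3,1)$, already lies in $\Id{\A_1^{(-,1)}}$, so it suffices to show that this part of $\Id{\A_1^{(-,1)}}$ has dimension at most one.

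Assume first $p\neq2$. Substitute $x_1=c_i$, $x_2=c_j$ and expand $f(c_i,c_j)$ by Lemma~\ref{lemma_decomposition}; since $\{x^ry^t\}$ is a basis of $\A_1$ by part (a) of Proposition~\ref{prop_basis1}, the identity $f(c_i,c_j)=0$ says that the coefficients of $x^ey^4$, $x^{e-1}y^3$, $x^{e-2}y^2$ and $x^{e-3}y$ (with $e=2i+2j$) all vanish, and as each of them is a polynomial in $i,j$, equating its coefficients to zero produces a homogeneous linear system in $\al_1,\dots,\al_6$. I would cut down the bookkeeping with the involution $x_1\leftrightarrow x_2$, which swaps $\al_1\leftrightarrow\al_6$, $\al_2\leftrightarrow\al_5$, $\al_3\leftrightarrow\al_4$ and splits the system into a symmetric and a skew part. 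The equations from $x^ey^4$, $x^{e-1}y^3$, $x^{e-2}y^2$ already force $\al_3=\al_4$, $\al_1+\al_6=\al_3$, $\al_2+\al_5=-3\al_3$, and the single skew relation $2(\al_1-\al_6)+(\al_2-\al_5)=0$; the cubic equations from $x^{e-3}y$ add the independent skew relation $3(\al_1-\al_6)+2(\al_2-\al_5)=0$. Together the two skew relations give $\al_1=\al_6$ and $\al_2=\al_5$, whence $(\al_1,\dots,\al_6)=\frac{\al_3}{2}(1,-3,2,2,-3,1)$ and $f\in\FF\,\Phi_{22}$. Every division carried out is by $2$, so the argument works for all $p\neq2$; in particular for $p=3$, where $\Phi_{22}$ reduces to $x_1^2x_2^2+2x_1x_2^2x_1+2x_2x_1^2x_2+x_2^2x_1^2$.

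Assume now $p=2$. Evaluating $f$ itself on the basis is no longer decisive: the system now collapses to $\al_3=\al_4=0$, $\al_1=\al_6$, $\al_2=\al_5$, and a two-parameter family survives; the unwanted direction is represented by $x_1^2x_2^2+x_2^2x_1^2$, which does vanish on every pair $(c_i,c_j)$ but is not an identity of $\A_1^{(-,1)}$, as one sees by linearizing. To rule it out I would use that $\lin_{x_1}^{(1,1)}(f)$ is again an identity, hence vanishes on all triples of basis elements; expanding it by Lemma~\ref{lemma_decomposition} and reading off the coefficient of the lowest power of $x$ that occurs gives, modulo $2$, the quantity $(\al_1+\al_2)$ times a polynomial in the three exponents which is not identically zero, so $\al_1=\al_2$. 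Since $\Phi_{22}$ reduces modulo $2$ to $x_1^2x_2^2+x_1x_2x_1x_2+x_2x_1x_2x_1+x_2^2x_1^2$, this yields $f=\al_1\Phi_{22}$, which completes the proof.

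The characteristic-$2$ case is, I expect, the main obstacle: one has to notice that --- unlike for $p\neq2$ --- evaluation on the basis does not detect membership in $\Id{\A_1^{(-,1)}}$ in multidegree $(2,2)$, so a partial linearization must be brought in, and the expansion of $\lin_{x_1}^{(1,1)}(f)$ via Lemma~\ref{lemma_decomposition} (a four-term sum whose lowest coefficient in $x$ is cubic in the exponents) is the most delicate step. The case $p\neq2$ is, by contrast, routine linear algebra once the $x_1\leftrightarrow x_2$ symmetry has been used, the only care needed being to check that the divisions appearing are all by $2$.
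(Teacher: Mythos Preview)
Your proof follows the same skeleton as the paper's: write $f$ in terms of the six monomials, evaluate $f(c_i,c_j)=0$ via Lemma~\ref{lemma_decomposition} to obtain linear equations in the $\al_r$, solve for $p\neq2$, and for $p=2$ invoke a partial linearization to get the last relation $\al_1=\al_2$. The paper's $p=2$ step is exactly your $\lin^{(1,1)}$ evaluated at $(c_1,c_1,c_2)$ (it linearizes $x_2$ instead of $x_1$, which is immaterial).

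The organizational difference is that you read off linear equations by treating each $x^{e-k}y^{4-k}$-coefficient as a polynomial in $(i,j)$ and equating its coefficients to zero, then use the $x_1\leftrightarrow x_2$ involution to split into symmetric and skew parts; the paper instead substitutes a few specific small pairs $(i,j)$ and solves the resulting system directly. Your route is cleaner and exposes the structure; the paper's is more pedestrian but valid in every characteristic without further comment.

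One point to tighten: the inference ``vanishes for all $i,j\in\NN$, hence all coefficients are zero'' is not automatic in positive characteristic, since integers only sweep out the prime subfield $\FF_p$. For the degree-$3$ polynomial coming from $x^{e-3}y$ and $p=3$ this is exactly borderline (e.g.\ $i^3-i$ vanishes on $\FF_3$). It turns out the particular coefficients you extract to get $3(\al_1-\al_6)+2(\al_2-\al_5)=0$ (namely those of $i^2$ and $j^2$) \emph{are} forced to zero even for $p=3$, because after reducing modulo $(i^3-i,j^3-j)$ the polynomial has degree $\leq2$ in each variable and hence is determined by its values on $\FF_3^2$; but you should say this, or else fall back on explicit substitutions as the paper does.
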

\begin{proof}  Assume that $$f(x_1,x_2) = \al_1 x_1^2 x_2^2 + \al_2 x_1 x_2 x_1 x_2 +\al_3 x_1 x_2^2 x_1 + \al_4  x_2 x_1^2 x_2 + \al_5 x_2 x_1 x_2 x_1 + \al_6 x_2^2 x_1^2$$ is a polynomial identity for $\A_1^{(-,1)}$, where $\al_1,\ldots,\al_6\in\FF$.  
Hence $f(c_i,c_j)=0$ in $\A_1$ for all $i,j\geq0$. Applying parts (a)--(d), respectively, of Lemma~\ref{lemma_decomposition}, we obtain that 
\begin{eq}\label{eq22A}
\al_1+\al_2+\al_3+\al_4+\al_5+\al_6=0,
\end{eq}
\begin{eq}\label{eq22B}
\begin{array}{c}
(i+5j)\al_1 + (2i+4j)\al_2 + (3i+3j)\al_3 + \\
(3i+3j)\al_4 + (4i+2j)\al_5 + (5i+j)\al_6=0, \;\;\text{ if }\;\; i+j\geq1, \\
\end{array}
\end{eq}
\begin{eq}\label{eq22C}
\begin{array}{c}
(3ij + 8j^2 - 4j)\al_1 + (i^2 + 5ij + 5j^2 -i - 3j)\al_2 + \\
(3i^2 + 6ij + 2j^2 - 3i - j)\al_3 + (2i^2 + 6ij + 3j^2 - i - 3j)\al_4 + \\
(5i^2 + 5ij + j^2 - 3i - j)\al_5 + (8i^2 + 3 ij - 4i)\al_6=0, \;\;\text{ if }\;\; i+j\geq1, \\
\end{array}
\end{eq}
\begin{eq}\label{eq22D}
\begin{array}{c}
j(2j-1)(i+2j-2)\al_1 + j(i+j-1)(i+2j-2)\al_2 + \\
i(i+j-1)(i+2j-2)\al_3 + j(i+j-1)(2i+j-2)\al_4 + \\
i(i+j-1)(2i+j-2)\al_5 + i(2i-1)(2i+j-2)\al_6=0, \;\;\text{ if }\;\; i+j\geq2.\\
\end{array}
\end{eq}%
Taking $i=0$, $j=1$ in equality~(\ref{eq22B}) we obtain 
$$5\al_1 + 4\al_2 + 3\al_3 + 3\al_4 + 2\al_5 + \al_6=0.$$
Considering $i=0$, $j=1$ and $i=1$, $j=0$ in equality~(\ref{eq22C}) we obtain that 
\begin{eq}\label{eq_new1}
4\al_1 + 2\al_2 +\al_3 = 0\quad \text{and}\quad \al_4 + 2\al_5 + 4\al_6 = 0,
\end{eq}%
respectively. 

Let $p\neq2$.  Considering $i=0$, $j=2$ and $i=2$, $j=0$ in equality~(\ref{eq22D}) we obtain that $3\al_1 + \al_2 = 0$ and $\al_5 + 3\al_6=0$, respectively.  It it easy to see that the above five equalities imply that 
\begin{eq}\label{eq22g1}
\al_2=\al_5=-3\al_1,\;\; \al_3=\al_4=2\al_1 \;\text{ and }\; \al_6=\al_1.
\end{eq}%
Hence, $f=\al_1 \Phi_{22}$. 

Let $p=2$. Equality (\ref{eq22B}) implies that $(\al_1+\al_3+\al_4+\al_6)(i+j)=0$  for all $i,j\geq0$ with $i+j\geq1$. Considering $i=1$, $j=0$ we obtain that $\al_1+\al_3+\al_4+\al_6=0$.  Similarly, equality~(\ref{eq22C}) implies that  $ij(\al_1+\al_2+\al_5+\al_6) + j\al_3 + i\al_4 =0$ for all $i,j\geq0$ with $i+j\geq1$. Equalities~(\ref{eq_new1}) imply that $\al_3=\al_4=0$. Applying equality~(\ref{eq22A}) we can see that   
\begin{eq}\label{eq22g2}
\al_3=\al_4=0, \;\; \al_5=\al_2 \;\text{ and }\; \al_6=\al_1.
\end{eq}%
In other words, $f=\al_1 [x_1^2,x_2^2] + \al_2 (x_1 x_2 x_1 x_2 + x_2 x_1 x_2 x_1)$. By Lemma~\ref{lemma_decomposition} we can see that
\begin{eq}\label{eq_lin}
0=\lin\nolimits_{x_2}^{(1,1)}(f)(c_1,c_1,c_2)=(\al_1+\al_2) x^2y.
\end{eq}
Thus $\al_1=\al_2$ and the proof is completed.
\end{proof} 

Recall that for $i_1,\ldots,i_k,j_1,\ldots,j_k\in\ZZ$, two {\it multisets} $\{i_1,\ldots,i_k\}_{\rm m}$ and $\{j_1,\ldots,j_k\}_{\rm m}$ are equal if for every $l\in\ZZ$ we have $|\{ 1\leq t\leq k \tq i_t=l \}| = |\{ 1\leq t\leq k \tq j_t=l \}|$.

\begin{lemmaArt}\label{lemma_deg211} Assume $\algA$ is an associative algebra and $\algV\subset\algA$ is an $\FF$-subspace. Suppose
\begin{enumerate}
\item[(a)] any polynomial identity of $\algV$ of multidegree $(3,1)$ is trivial;

\item[(b)] any polynomial identity of $\algV$ of multidegree $(2,2)$ is equal to  $\xi \Phi_{22}$ for some $\xi\in \FF$.
\end{enumerate} Then every polynomial identity $f$ of $\algV$ of multidegree $(2,1,1)$ is equal to 
$$\al x_1 \St_3(x_1,x_2,x_3) -\be\, [[x_1,x_2],[x_1,x_3]] + \ga\, \St_3(x_1,x_2,x_3)x_1 + \xi h(x_1,x_2,x_3)$$
for some $\al,\be,\ga,\xi\in\FF$ and  $h(x_1,x_2,x_3)$ is given as
$$ h(x_1,x_2,x_3) =x_1^2 x_3 x_2 + 2 x_3 x_1^2 x_2 + x_3 x_2 x_1^2  - x_1 x_2 x_3 x_1  + 3 x_1 x_3 x_2 x_1 -3 x_1 x_3 x_1 x_2  - 3 x_3 x_1 x_2 x_1.$$
\end{lemmaArt}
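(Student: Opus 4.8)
The plan is to reduce the multidegree $(2,1,1)$ case to the already-resolved $(2,2)$ and $(3,1)$ cases by exploiting partial linearizations. A general multihomogeneous polynomial of multidegree $(2,1,1)$ lives in a $12$-dimensional space spanned by the $12$ arrangements of two $x_1$'s, one $x_2$ and one $x_3$ (that is, $\binom{4}{2}=6$ positions for the pair of $x_1$'s, times $2$ for the order of $x_2,x_3$). So write $f=\sum_{k=1}^{12}\al_k w_k$ with $\al_k\in\FF$ and $w_k$ the monomials. The first step is to identify which specializations and linearizations of $f$ land in lower-complexity spaces whose identities we control: setting $x_3:=x_2$ turns $f$ into a polynomial identity of multidegree $(2,2)$, hence by hypothesis (b) it must be a scalar multiple of $\Phi_{22}$; and the partial linearization $\lin_{x_1}^{(1,1)}$ of $f$ has multidegree $(1,1,1,1)$ but if we then identify two of the new variables appropriately we return to $(2,1,1)$ or, setting two variables equal, we can produce multidegree $(3,1)$ instances via substitutions like $x_2:=x_1$, which by hypothesis (a) must vanish identically. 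These two families of constraints, written out as linear equations in $\al_1,\dots,\al_{12}$, should cut the solution space down to a small-dimensional one.

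**Next I would** exhibit an explicit spanning set for that solution space, namely the four polynomials $x_1\St_3(x_1,x_2,x_3)$, $[[x_1,x_2],[x_1,x_3]]$, $\St_3(x_1,x_2,x_3)x_1$, and $h(x_1,x_2,x_3)$, and verify two things: first, that each of these four, when one specializes $x_3:=x_2$, becomes a multiple of $\Phi_{22}$ (so they satisfy the $(2,2)$ constraint) and becomes trivial or a multiple of a $(3,1)$-identity under the relevant $x_2:=x_1$ type substitutions (so they satisfy the $(3,1)$ constraints) — this shows they actually lie in the constrained space; and second, that they are linearly independent in the $12$-dimensional ambient space, which is a finite rank computation. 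Combining: the constrained space is at most $4$-dimensional (from the linear algebra of step one) and contains these four independent vectors, hence equals their span, which is exactly the asserted form.

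**The main obstacle** will be bookkeeping: correctly enumerating the substitutions/linearizations that produce valid constraints without producing redundant ones, and making sure the constraint count really forces dimension $\le 4$ rather than, say, $5$ or $6$ — it is conceivable that the $(2,2)$ and $(3,1)$ constraints alone are not quite enough and one must also use a $(2,1,1)\to(2,1,1)$ symmetry or a further linearization/specialization (for instance, swapping $x_2\leftrightarrow x_3$, under which $\Phi_{22}$-type contributions are symmetric). I would keep an eye out for characteristic-$2$ subtleties, since hypothesis (b) already has the flavor of being characteristic-independent but the proof of Proposition~\ref{prop_deg22} split into cases $p\neq 2$ and $p=2$; here, however, since we are only \emph{using} (a) and (b) as black boxes, the argument should be uniform in $p$. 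The one genuinely computational piece — checking that $h$, $x_1\St_3$, $\St_3 x_1$, and the double-commutator satisfy all the derived constraints and are independent — is routine and can be delegated to the computer program referenced for the other lemmas.
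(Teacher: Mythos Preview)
Your approach is essentially the same as the paper's: use the specializations $x_3:=x_2$ (giving the $(2,2)$ constraint) and $x_2:=x_1$ (giving the $(3,1)$ constraint), write out the resulting linear equations on the twelve coefficients, and solve. The paper simply carries this out directly: three equations from $f(x_1,x_1,x_2)=0$ and six equations from $f(x_1,x_2,x_2)=\xi\,\Phi_{22}$ already determine all twelve coefficients in terms of $\alpha=\alpha_{1123}$, $\beta=\alpha_{2113}$, $\gamma=\alpha_{2311}$, and $\xi$, yielding the claimed form immediately.

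A few of your proposed steps are unnecessary. The partial linearization $\lin_{x_1}^{(1,1)}$ is not needed at all; the direct substitution $x_2:=x_1$ already produces a $(3,1)$-identity. Your worry that the $(3,1)$ and $(2,2)$ constraints might not suffice and that a $x_2\leftrightarrow x_3$ symmetry may be required is unfounded: the nine equations from the two substitutions are already enough (the substitution $x_3:=x_1$ gives constraints equivalent to those from $x_2:=x_1$ modulo the $(2,2)$ relations, so nothing is gained there either). Finally, verifying that the four displayed polynomials are linearly independent and satisfy the constraints is not required for the lemma as stated, which only asserts that every identity \emph{lies in} their span; the paper therefore skips this and just reads off the parameterization from the solved system.
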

\begin{proof}
First, we have that $f(x_1,x_2,x_3)=\sum {\al_{ijkl}}x_i x_j x_k x_l$, where the sum ranges over all $1\leq i,j,k,l\leq 3$ with $\{i,j,k,l\}_{\rm m}=\{1,1,2,3\}_{\rm m}$ and $\al_{ijkl}\in\FF$. For short, we write $\al_{1^223}$ for $\al_{1123}$, etc. Applying part (a) to $f(x_1,x_1,x_2)=0$ we obtain that
$$\begin{array}{rcl}
\al_{1^223} + \al_{21^23} + \al_{1213}
& = &  0,\\
\al_{231^2} + \al_{1312} + \al_{1321} & = &  0,\\
\al_{31^22} + \al_{321^2} + \al_{3121} & = &  0.\\
\end{array}$$
\noindent{}Similarly, applying part (b) to $f(x_1,x_2,x_2)=0$ we obtain that
$$\begin{array}{rcl}
\al_{1^223} + \al_{1^232} & = & \xi, \\ 
\al_{231^2} + \al_{321^2} & = & \xi,\\
\al_{1213} + \al_{1312} & = & -3\xi,\\
\al_{2131} + \al_{3121} & = & -3\xi,\\
\al_{1231} + \al_{1321} & = & 2\xi,\\
\al_{21^23} + \al_{31^22} & = & 2\xi.\\
\end{array}$$
These equations imply the required equality for $f$ for $\al = \al_{1^223}$, $\be=\al_{21^23}$, $\ga=\al_{231^2}$.
\end{proof}


\begin{propArt}\label{prop_id211} The following set is an $\FF$-basis of the space of all polynomial identities for $\A_1^{(-,1)}$ of multidegree $(2,1,1)$:
\begin{enumerate}
\item[(a)] $x_1 \St_3(x_1,x_2,x_3)$, $\St_3(x_1,x_2,x_3)x_1$, $\Phi_{211}$, in case $p\neq 2$; 
\item[(b)] $x_1 \St_3(x_1,x_2,x_3)$, $\St_3(x_1,x_2,x_3)x_1$, $\Psi_{211}$, $[[x_1,x_2],[x_1,x_3]]$, in case $p= 2$.
\end{enumerate}
\end{propArt}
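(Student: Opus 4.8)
The plan is to combine Lemma~\ref{lemma_deg211} with the concrete structural facts about $\A_1^{(-,1)}$ already available in the excerpt. By Proposition~\ref{prop_deg31} the hypothesis (a) of Lemma~\ref{lemma_deg211} holds, and by Proposition~\ref{prop_deg22} so does hypothesis (b) with $\Phi_{22}$ as the single generator. Hence every polynomial identity $f$ of multidegree $(2,1,1)$ for $\A_1^{(-,1)}$ lies in the span of the four elements $x_1\St_3(x_1,x_2,x_3)$, $[[x_1,x_2],[x_1,x_3]]$, $\St_3(x_1,x_2,x_3)x_1$ and $h(x_1,x_2,x_3)$. The first task is to identify which of these four elements are themselves genuine identities for $\A_1^{(-,1)}$ and to pin down the linear relations among them modulo $\Id{\A_1^{(-,1)}}$.

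First I would record that $\St_3$ is an identity for $\A_1^{(-,1)}$ by Proposition~\ref{theo_PI_min}(b); consequently $x_1\St_3(x_1,x_2,x_3)$ and $\St_3(x_1,x_2,x_3)x_1$ are identities for $\A_1^{(-,1)}$ (multiplying an identity of a subspace on the left or right by a fixed variable preserves the identity property, since the evaluation of the product is a product of evaluations). Next I would show that $h(x_1,x_2,x_3)$ is, modulo these two, equal to $\pm\Phi_{211}$ up to scalar: indeed $\Phi_{211}=\lin_{x_2}^{(1,1)}(\Phi_{22})$ is an identity by Lemma~\ref{lemma_deg22_id}, and a direct comparison of the two multidegree-$(2,1,1)$ elements $h$ and $\Phi_{211}$ should reveal that their difference is a combination of $x_1\St_3$ and $\St_3 x_1$ (one simply expands both in the monomial basis and solves a small linear system). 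This rewrites the ambient span from $\{x_1\St_3,\ [[x_1,x_2],[x_1,x_3]],\ \St_3 x_1,\ h\}$ to $\{x_1\St_3,\ [[x_1,x_2],[x_1,x_3]],\ \St_3 x_1,\ \Phi_{211}\}$.

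The decisive point is then a case split on $p$. For $p\neq2$ I would show $[[x_1,x_2],[x_1,x_3]]$ is \emph{not} an identity for $\A_1^{(-,1)}$ (a single explicit evaluation $x_1\mapsto c_{i}$, $x_2\mapsto c_{j}$, $x_3\mapsto c_{k}$ via Lemma~\ref{lemma_decomposition} suffices to get a nonzero value), and moreover that $[[x_1,x_2],[x_1,x_3]]$ is not congruent modulo $\Id{\A_1^{(-,1)}}$ to any combination of $x_1\St_3$, $\St_3 x_1$, $\Phi_{211}$; hence the coefficient $\beta$ in the Lemma~\ref{lemma_deg211} expression must vanish for $f$ to be an identity, leaving exactly $x_1\St_3$, $\St_3 x_1$, $\Phi_{211}$. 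For $p=2$ the element $[[x_1,x_2],[x_1,x_3]]$ \emph{is} an identity by Lemma~\ref{lemma_deg22_id} (it is the specialization of $[[x_1,x_2],[x_3,x_4]]$), so all four of $x_1\St_3$, $\St_3 x_1$, $\Phi_{211}$, $[[x_1,x_2],[x_1,x_3]]$ are identities; but now $\Phi_{211}$ and $\Psi_{211}$ are related — I would check that over characteristic $2$ one has $\Phi_{211}\equiv\Psi_{211}$ modulo the span of $x_1\St_3$, $\St_3 x_1$, $[[x_1,x_2],[x_1,x_3]]$, which lets us replace $\Phi_{211}$ by $\Psi_{211}$ in the basis. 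In each case I would finish by verifying linear independence of the claimed basis elements in the monomial space of multidegree $(2,1,1)$ (a rank computation on an explicit matrix, assisted by the program of~\cite{comp}), so that the three- (resp. four-) element set is genuinely a basis rather than merely a spanning set.

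The main obstacle I anticipate is the bookkeeping in the characteristic-$2$ case: establishing the precise congruence $\Phi_{211}\equiv\Psi_{211}$ modulo $\{x_1\St_3,\ \St_3 x_1,\ [[x_1,x_2],[x_1,x_3]]\}$, and simultaneously confirming that these four elements remain linearly independent over $\FF$ of characteristic $2$ (collapses of coefficients are exactly what one must rule out). The characteristic-$\neq2$ half is comparatively routine once the non-identity status of $[[x_1,x_2],[x_1,x_3]]$ is nailed down by a single numerical evaluation.
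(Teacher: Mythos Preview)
Your overall strategy---use Lemma~\ref{lemma_deg211} to confine $f$ to a four-dimensional ambient space, then replace $h$ by something more familiar and peel off the piece that is not an identity---is the same as the paper's. But the key intermediate claim you rely on is false, and in characteristic~$2$ this actually derails the argument.

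Concretely, the assertion that $h-\lambda\Phi_{211}\in\Span\{x_1\St_3,\,\St_3 x_1\}$ for some scalar $\lambda$ cannot hold: every monomial of $x_1\St_3$ starts with $x_1$ and every monomial of $\St_3 x_1$ ends with $x_1$, yet $h$ and $\Phi_{211}$ both contain $x_2x_1^2x_3$ and $x_3x_1^2x_2$ with mismatched coefficients. The correct relation (valid when $p\neq2$) is
\[
[[x_1,x_2],[x_1,x_3]]-h=\tfrac12\bigl(x_1\St_3+\St_3 x_1-\Phi_{211}\bigr),
\]
so $h$ differs from $\tfrac12\Phi_{211}$ by a combination of $x_1\St_3$, $\St_3 x_1$ \emph{and} $[[x_1,x_2],[x_1,x_3]]$. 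With this corrected relation your plan for $p\neq2$ does go through: after the change of basis the only non-identity among $\{x_1\St_3,\,\St_3 x_1,\,\Phi_{211},\,[[x_1,x_2],[x_1,x_3]]\}$ is the last one (the evaluation $[[c_i,c_j],[c_i,c_k]]=(j-i)(k-i)(k-j)c_{2i+j+k-3}$ is generically nonzero), forcing its coefficient to vanish. This is essentially what the paper does, just organised as ``first evaluate to get $\xi=\beta$, then rewrite''.

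For $p=2$ the gap is more serious. In characteristic~$2$ one has $\Phi_{211}=x_1\St_3+\St_3 x_1$, so the set $\{x_1\St_3,\,\St_3 x_1,\,\Phi_{211},\,[[x_1,x_2],[x_1,x_3]]\}$ spans only a three-dimensional space; in particular $h$ does not lie in it, and you cannot ``replace $\Phi_{211}$ by $\Psi_{211}$'' because $\Psi_{211}\not\equiv\Phi_{211}$ modulo $\{x_1\St_3,\,\St_3 x_1,\,[[x_1,x_2],[x_1,x_3]]\}$ (check the coefficient of $x_1^2x_2x_3$ versus $x_1^2x_3x_2$). The paper avoids this by observing directly that in characteristic~$2$ one has $h=x_1\St_3+\Psi_{211}$, so the ambient space is $\Span\{x_1\St_3,\,\St_3 x_1,\,\Psi_{211},\,[[x_1,x_2],[x_1,x_3]]\}$, and all four generators are already identities. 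There is no coefficient to kill; one only has to check linear independence, which the paper does by reading off three monomial coefficients.
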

\begin{proof} By Proposition~\ref{theo_PI_min} and Lemma~\ref{lemma_deg22_id} all elements from the formulation of the proposition are identities for $\A_1^{(-,1)}$.  

\medskip
\noindent{\bf 1.} At first, we will show that any polynomial identity $f\in\FX$ for $\A_1^{(-,1)}$ of multidegree $(2,1,1)$ is an $\FF$-linear combination of elements from the formulation of the proposition. Since $x_1 \St_3(x_1,x_2,x_3)$ and  $\St_3(x_1,x_2,x_3)x_1$ are polynomial identities for $\A_1^{(-,1)}$ by Lemma~\ref{theo_PI_min}, Lemma~\ref{lemma_deg211} implies that it is enough to complete the proof for 
$$f(x_1,x_2,x_3) = -\be\, [[x_1,x_2],[x_1,x_3]] +  \xi h(x_1,x_2,x_3),$$
where $\be,\xi\in\FF$. 

Assume $p\neq2$. Since $0=f(c_3,c_2,c_1) = 2 (\be-\xi)x^{6} y$ by Lemma~\ref{lemma_decomposition}, we obtain that $\xi=\be$.  By straightforward calculations we can see that
$$ [[x_1,x_2],[x_1,x_3]] - h(x_1,x_2,x_3) = \frac{1}{2}\left(x_1 \St_3(x_1,x_2,x_3) + \St_3(x_1,x_2,x_3)x_1 - \Phi_{211}\right).$$
The required is proven.

Assume that $p=2$. By straightforward calculations we can see that
$$h(x_1,x_2,x_3)= x_1 \St_3(x_1,x_2,x_3) + \Psi_{211}(x_1,x_2,x_3).$$ 
Note that $$\Phi_{211}= x_1 \St_3(x_1,x_2,x_3) + \St_3(x_1,x_2,x_3) x_1.$$ 
The required is proven.

\medskip
\noindent{\bf 2.} To show that  elements from the formulation of the lemma are linearly independent in case $p\neq2$, we consider 
$$\al\, x_1 \St_3(x_1,x_2,x_3) + \be\, \St_3(x_1,x_2,x_3)x_1 + \ga\, \Phi_{211}(x_1,x_2,x_3) = 0$$
for some $\al,\be,\ga\in\FF$. Taking the coefficients of $x_1^2x_2x_3$ and $x_1^2x_3x_2$ we therefore obtain $\al+\ga=-\al+\ga=0$. Thus $\al=\ga=0$ and the required is proven.

Similarly, for $p=2$ we consider 
$$\al\, x_1 \St_3(x_1,x_2,x_3) + \be\, \St_3(x_1,x_2,x_3)x_1 + \ga\, \Psi_{211} +\de [[x_1,x_2],[x_1,x_3]] =0$$
for some $\al,\be,\ga,\de\in\FF$. Taking the coefficients of $x_1^2x_3x_2$, $x_2x_3x_1^2$, $x_1x_3x_2x_1$ we obtain $-\al=\be=-\al-\be+\de=0$. Thus $\al=\be=\de=0$ and the required is proven.

\end{proof}

\section{Multilinear identities for $\A_1^{(-,1)}$ of degree four}\label{section_mult4} 

As in Section~\ref{section_deg4} we denote $c_i=x^i y$ for $i\geq0$. Consider the following multilinear elements of $\FF\LA X\RA$ of degree $4$:
$$\Ga=   -x_ 1 x_2 x_3 x_4 + 2 x_1 x_2 x_4 x_3 + x_1 x_3 x_4 x_2 - 
   2 x_1 x_4 x_2 x_3 $$
   $$ + 2 x_2 x_1 x_3 x_4 - 2 x_2 x_1 x_4 x_3 - 2 x_2 x_3 x_1 x_4 + x_2 x_3 x_4 x_1 + x_2 x_4 x_1 x_3 $$
   $$+ x_3 x_1 x_2 x_4 - 2 x_3 x_1 x_4 x_2 + x_3 x_4 x_1 x_2 + x_4 x_1 x_2 x_3 - x_4 x_2 x_3 x_1, $$
\vspace{-0.2cm}  
$$\La =  -3 x_1 x_2 x_3 x_4 + 3 x_1 x_2 x_4 x_3 + 2 x_1 x_3 x_2 x_4 - 2 x_1 x_4 x_2 x_3 $$
$$+ 3 x_2 x_1 x_3 x_4 - 3 x_2 x_1 x_4 x_3 - 2 x_2 x_3 x_1 x_4 + 2 x_2 x_4 x_1 x_3 $$
$$- x_3 x_1 x_4 x_2 +  x_3 x_2 x_4 x_1 + x_4 x_1 x_3 x_2 - x_4 x_2 x_3 x_1,$$
\vspace{-0.2cm}  
$$\De=x_2x_1x_3x_4 + x_2x_4x_1x_3 + x_3x_1x_2x_4 + x_3x_4x_1x_2 + x_4x_1x_2x_3 + x_4x_1x_3x_2.$$

A monomial from $\FF\LA X\RA$ of multidegree $(1,1,1,1)$ is called {\it reduced} if it does not belong to the following list:
\begin{eq}\label{eq_non_reduced}
x_1x_4x_3x_2,\; x_2x_4x_3x_1,\;  x_3x_2x_1x_4, \; x_3x_4x_2x_1, \; 
x_4x_2x_1x_3,\; x_4x_3x_1x_2,\; x_4x_3x_2x_1.
\end{eq}%
An element from $\FF\LA X\RA$ of multidegree $(1,1,1,1)$ is called {\it reduced} if it is a linear combination of reduced monomials. Note that $\Ga$, $\La$, $\De$ are reduced.

\begin{lemmaArt}\label{lemma_ireduced}
For every homogeneous $f\in\FF\LA X\RA$ of multidegree $(1,1,1,1)$ there exist multilinear $f_1,f_2\in \FF\LA X\RA$ of degree 4 such that $f=f_1+f_2$,
\begin{enumerate}
\item[$\bullet$] $f_1$ is reduced;

\item[$\bullet$] $f_2$ is a linear combination of polynomials of the form $x_i \St_3(x_j,x_k,x_l)$, $\St_3(x_i,x_j,x_k)x_l$ where $\{i,j,k,l\}=\{1,2,3,4\}$.
\end{enumerate}
\end{lemmaArt}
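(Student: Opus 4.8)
The plan is to reduce the claim to a surjectivity statement for an explicit linear map. Let $V$ denote the $24$-dimensional space of multilinear elements of $\FF\LA X\RA$ of degree $4$ in $x_1,x_2,x_3,x_4$, with basis the $24$ monomials $x_{\sigma(1)}x_{\sigma(2)}x_{\sigma(3)}x_{\sigma(4)}$, $\sigma\in S_4$. Write $V_{\rm red}\subset V$ for the span of the $17$ reduced monomials, and let $\pi\colon V\to\FF^7$ be the linear map recording the coefficients of the seven non-reduced monomials listed in~(\ref{eq_non_reduced}); thus $V_{\rm red}=\ker\pi$. Finally, let $W\subset V$ be the span of the polynomials $x_i\St_3(x_j,x_k,x_l)$ and $\St_3(x_i,x_j,x_k)x_l$ with $\{i,j,k,l\}=\{1,2,3,4\}$. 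It suffices to prove that $\pi(W)=\FF^7$: granting this, for an arbitrary homogeneous $f$ of multidegree $(1,1,1,1)$ we choose $f_2\in W$ with $\pi(f_2)=\pi(f)$ and set $f_1=f-f_2\in\ker\pi=V_{\rm red}$; then $f_1$ is reduced, $f_2$ is of the required shape, and $f=f_1+f_2$.

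To verify $\pi(W)=\FF^7$ I would simply expand the eight generators, using $\St_3(a,b,c)=abc-acb-bac+bca+cab-cba$, and read off which monomial of each expansion lies in the list~(\ref{eq_non_reduced}). One finds that each of $x_1\St_3(x_2,x_3,x_4)$, $x_2\St_3(x_1,x_3,x_4)$, $\St_3(x_1,x_2,x_4)x_3$ and $\St_3(x_1,x_2,x_3)x_4$ contains exactly one non-reduced monomial, so their $\pi$-images are, up to sign, the standard basis vectors attached to $x_1x_4x_3x_2$, $x_2x_4x_3x_1$, $x_4x_2x_1x_3$ and $x_3x_2x_1x_4$. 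Then $x_3\St_3(x_1,x_2,x_4)$ contributes $x_3x_2x_1x_4$ (already known) and the new monomial $x_3x_4x_2x_1$; $\St_3(x_1,x_3,x_4)x_2$ contributes $x_1x_4x_3x_2$ (known) and the new $x_4x_3x_1x_2$; and $x_4\St_3(x_1,x_2,x_3)$ contributes $x_4x_2x_1x_3$, $x_4x_3x_1x_2$ (both known by then) and the new $x_4x_3x_2x_1$. Subtracting off the already-identified contributions isolates each remaining coordinate direction, so $\pi|_W$ hits all seven basis vectors of $\FF^7$, i.e. $\pi(W)=\FF^7$. Every coefficient occurring is $\pm1$, so no division is used and the argument is valid over an arbitrary infinite field, independently of $\Char\FF$.

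I do not anticipate a genuine obstacle here: the only real content is the bookkeeping in the previous paragraph, and the triangular pattern it exhibits — four of the generators pinning down four coordinates outright, the other three then each resolving one fresh coordinate — makes the surjectivity of $\pi|_W$ immediate. Conceptually the lemma just records that the eight Laplace-type pieces $x_i\St_3$ and $\St_3 x_l$, together with the reduced part $V_{\rm red}$, span all of $V$, and that the seven monomials in~(\ref{eq_non_reduced}) are a convenient choice of ``leading terms'' witnessing this. (One may also note, though it is not needed, that the eight generators satisfy a single linear relation coming from the Laplace expansion of $\St_4$, so $W$ is exactly $7$-dimensional.)
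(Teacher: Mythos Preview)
Your argument is correct and is essentially the same as the paper's: both proofs identify, for each of the seven non-reduced monomials, a generator of the form $x_i\St_3(\ldots)$ or $\St_3(\ldots)x_l$ whose only ``new'' non-reduced monomial is the one in question, yielding a triangular elimination. The paper phrases this as a lexicographic rewriting (``$w\equiv 0$''), while you phrase it as surjectivity of the projection $\pi|_W$, but the seven generators used and the order in which they peel off the seven coordinates are identical; your parenthetical remark on the single linear relation among the eight generators is exactly the content of part~2 of Remark~\ref{remark_equality}.
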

\begin{proof}
Consider the usual lexicographical order on the set of all monomials from $\FF\LA X\RA$
of multidegree $(1,1,1,1)$. Denote by $L$ the subspace of $\FF\LA X\RA$ generated by  $x_i \St_3(x_j,x_k,x_l)$, $\St_3(x_i,x_j,x_k)x_l$ for $\{i,j,k,l\}=\{1,2,3,4\}$. Given a monomial $w\in\FF\LA X\RA$ of multidegree $(1,1,1,1)$, we write $w\equiv 0$, if $w-h\in L$ for some $h\in\FF\LA X\RA$ such that all monomials of $h$ are less than $w$. Since $x_1\St_3(x_2,x_3,x_4)\in L$, we obtain that $x_1  x_4 x_3 x_2\equiv0$. Similarly, considering $x_2\St_3(x_1,x_3,x_4)\in L$, $x_3\St_3(x_1,x_2,x_4)\in L$, $x_4\St_3(x_1,x_2,x_3)\in L$, respectively,  we obtain that 
$$x_2x_4x_3x_1\equiv0,\;\; x_3x_4x_2x_1\equiv0, \;\; x_4x_3x_2x_1\equiv0,$$
respectively. Moreover, considering  $\St_3(x_1,x_3,x_4)x_2\in L$, $\St_3(x_1,x_2,x_3)x_3\in L$ and also $\St_3(x_1,x_2,x_3)x_4\in L$, respectively, we can see that 
$$x_4x_3x_1x_2\equiv 0,\;\; x_4x_2x_1x_3\equiv0,\;\; x_3x_2x_1x_4\equiv0,$$
respectively. Consequently, applying the obtained equivalences to $f$, it is easy to see that the claim holds. 
\end{proof}

\begin{lemmaArt}\label{lemma_deg1111_id} The elements $\Ga$,  $\La$ are polynomial identities for $\A_1^{(-,1)}$. If  $p=2$, then $\De$ is also a polynomial identity for $\A_1^{(-,1)}$. 
\end{lemmaArt}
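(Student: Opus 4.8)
The plan is to verify that $\Ga$, $\La$ (and $\De$ when $p=2$) vanish on the distinguished basis $\{c_i = x^i y \mid i\geq 0\}$ of $\A_1^{(-,1)}$, and then invoke Lemma~\ref{lemma_id}. Since $\Ga$, $\La$, $\De$ are already multilinear of degree $4$, their only partial linearizations are themselves (up to renaming variables), so it suffices to check that each of these polynomials evaluates to $0$ when $x_1,x_2,x_3,x_4$ are replaced by arbitrary basis elements $c_i, c_j, c_k, c_l$ with $i,j,k,l\geq 0$. This is exactly the hypothesis required by Lemma~\ref{lemma_id}.

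The concrete mechanism is Lemma~\ref{lemma_decomposition}: for any monomial $x_{\si(1)}x_{\si(2)}x_{\si(3)}x_{\si(4)}$ evaluated at $(c_i,c_j,c_k,c_l)$, we substitute the permuted indices into the four coefficient formulas (a)--(d) of Lemma~\ref{lemma_decomposition}, obtaining the expansion of $c_{i_{\si}}c_{j_{\si}}c_{k_{\si}}c_{l_{\si}}$ in the basis $\{x^r y^t\}$ of $\A_1$. Summing over the monomials of $\Ga$ with their coefficients, we get, for each power $x^{e}y^4$, $x^{e-1}y^3$, $x^{e-2}y^2$, $x^{e-3}y$ (where $e=i+j+k+l$), a polynomial expression in $i,j,k,l$ that must be identically zero. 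The $y^4$-coefficient is simply the sum of the coefficients of $\Ga$, which one checks is $0$; the $y^3$, $y^2$, $y$ coefficients are polynomial identities in $i,j,k,l$ of degree $1$, $2$, $3$ respectively that need to be verified to vanish identically. The same computation is carried out for $\La$, and for $\De$ under the assumption $p=2$. These verifications are routine but lengthy, and are precisely the computations delegated to the computer program in~\cite{comp}, as announced for Lemma~\ref{lemma_deg1111_id} in the introduction.

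The main obstacle is purely the bookkeeping volume: each of $\Ga$ and $\La$ has fourteen (resp.\ twelve) monomials, each contributing via four coefficient formulas whose arguments are the appropriately permuted indices, so one is checking that four polynomials in four variables — coming from a sum of a dozen or so terms — vanish identically. There is no conceptual difficulty beyond correctly tracking which of $i,j,k,l$ plays the role of the first, second, third, fourth factor in each monomial; the $p=2$ case for $\De$ additionally requires reducing the resulting integer polynomials modulo $2$. Accordingly I would state that the verification is done by Lemma~\ref{lemma_decomposition} together with Lemma~\ref{lemma_id}, citing the computer program~\cite{comp} for the explicit arithmetic, exactly in the style of the proof of Lemma~\ref{lemma_deg22_id}.

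\begin{proof}
By Lemma~\ref{lemma_decomposition}, for any $1\leq a,b,c,d\leq 4$ and any $i_1,i_2,i_3,i_4\geq 0$ the product $c_{i_a}c_{i_b}c_{i_c}c_{i_d}$ expands in the basis $\{x^r y^t\mid r,t\geq 0\}$ of $\A_1$ with coefficients that are fixed polynomials in $i_a,i_b,i_c,i_d$. Substituting into $\Ga$ and $\La$ and collecting, for each value of $e=i_1+i_2+i_3+i_4$, the coefficients of $x^{e}y^4$, $x^{e-1}y^3$, $x^{e-2}y^2$ and $x^{e-3}y$, one obtains polynomial expressions in $i_1,i_2,i_3,i_4$; by straightforward calculation (by means of a computer program, see~\cite{comp}) each of these is identically zero. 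Hence $\Ga$ and $\La$ vanish over the set $\{c_i\mid i\geq 0\}$. Since $\Ga$ and $\La$ are multilinear, their only partial linearizations are themselves up to renaming of variables, so all partial linearizations of $\Ga$ and $\La$ vanish over $\{c_i\mid i\geq 0\}$, which is a basis of $\A_1^{(-,1)}$ by part (a) of Proposition~\ref{prop_basis1}. Lemma~\ref{lemma_id} now implies that $\Ga$ and $\La$ are polynomial identities for $\A_1^{(-,1)}$.

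Assume $p=2$. The same computation, performed modulo $2$, shows that $\De$ vanishes over $\{c_i\mid i\geq 0\}$; again $\De$ is multilinear, so by Lemma~\ref{lemma_id} it is a polynomial identity for $\A_1^{(-,1)}$.
\end{proof}
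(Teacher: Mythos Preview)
Your proposal is correct and follows essentially the same approach as the paper: use Lemma~\ref{lemma_decomposition} to expand each monomial of $\Ga$, $\La$ (and $\De$ when $p=2$) on basis elements $c_i$, verify by computer that the result is identically zero, and then invoke Lemma~\ref{lemma_id} together with the fact that $\{c_i\}$ is a basis. Your write-up is slightly more explicit (noting that multilinearity makes the partial-linearization hypothesis of Lemma~\ref{lemma_id} trivial), but the argument is the same.
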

\begin{proof}Using Lemma~\ref{lemma_decomposition} and straightforward calculations (by means of a computer program) we can see that $\Ga$, $\La$ are equal to zero over the set $\{c_i\, |\, i\geq0\}$. Since the set  $\{c_i\, |\, i\geq0\}$ is a basis of $\A_1^{(-,1)}$, Lemma~\ref{lemma_id} concludes the proof. Similarly, we prove Lemma~\ref{lemma_deg1111_id} for $\De$ in case $p=2$. 
\end{proof}

The following remark can be verified by straightforward calculations.

\begin{remark}\label{remark_equality} The following equalities hold in $\FF\LA X\RA$:
\begin{enumerate}
\item[1.] \begin{align*}
4\, \Ga - 2\,\La = \ & \Phi_{\rm lin} + 
x_1 \St_3(x_2,x_3,x_4) + x_2\St_3(x_1,x_3,x_4) + 2\, x_3 \St_3(x_1,x_2,x_4)\\  
 &+\St_3(x_2,x_3,x_4)x_1 + \St_3(x_1,x_3,x_4)x_2 + 2\, \St_3(x_1,x_2,x_4)x_3.  
\end{align*}

\item[2.] \begin{align*}
& \ x_1 \St_3(x_2,x_3,x_4) - x_2 \St_3(x_1,x_3,x_4) + x_3 \St_3(x_1,x_2,x_4) - x_4 \St_3(x_1,x_2,x_3) \\ 
& + \St_3(x_2,x_3,x_4)x_1 - \St_3(x_1,x_3,x_4)x_2 + \St_3(x_1,x_2,x_4)x_3 - \St_3(x_1,x_2,x_3)x_4 = 0. 
\end{align*}
\end{enumerate}
\end{remark}

\begin{propArt}\label{prop_id1111} The following set is an $\FF$-basis of the space of all polynomial identities for $\A_1^{(-,1)}$ of multidegree $(1,1,1,1)$ in case $p\neq 2$:
$$\Ga,\; \Phi_{\rm lin}, \; x_1\St_3(x_2,x_3,x_4), \; x_2\St_3(x_1,x_3,x_4), \; x_3\St_3(x_1,x_2,x_4), \; x_4\St_3(x_1,x_2,x_3), $$
$$\St_3(x_2,x_3,x_4)x_1,\; \St_3(x_1,x_3,x_4)x_2,\; \St_3(x_1,x_2,x_4)x_3.$$
\end{propArt}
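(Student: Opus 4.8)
The plan is to prove the two required assertions separately: first that every polynomial identity of $\A_1^{(-,1)}$ of multidegree $(1,1,1,1)$ lies in the span $S$ of the nine listed elements, and second that these nine elements are linearly independent in $\FF\LA X\RA$.

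For the spanning statement, I would start from Lemma~\ref{lemma_ireduced}: any identity $f$ of multidegree $(1,1,1,1)$ decomposes as $f=f_1+f_2$ with $f_1$ reduced and $f_2$ a linear combination of the six standard-type polynomials $x_i\St_3(\cdots)$ and $\St_3(\cdots)x_j$. Since all of $x_1\St_3(x_2,x_3,x_4),\ldots,x_4\St_3(x_1,x_2,x_3)$ and three of the four right-hand $\St_3(\cdots)x_j$ are already in the list, and the fourth, $\St_3(x_1,x_2,x_3)x_4$, is expressible modulo the others via part~2 of Remark~\ref{remark_equality}, the term $f_2$ already lies in $S$; hence it suffices to treat a reduced identity $f_1$. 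Writing $f_1$ as an unknown linear combination of the $17$ reduced monomials and imposing $f_1(c_i,c_j,c_k,c_l)=0$ for all $i,j,k,l\ge 0$ (via Lemma~\ref{lemma_decomposition}), one extracts a finite linear system in the coefficients: for each of the four levels $x^{e}y^4, x^{e-1}y^3, x^{e-2}y^2, x^{e-3}y$ the coefficient is a polynomial in $i,j,k,l$ that must vanish identically, so each monomial $i^aj^bk^cl^d$ contributes an equation. Solving this system (assisted by the computer program \cite{comp}) should show that the reduced identities form a space of dimension $2$, spanned by $\Ga$ and $\La$ — equivalently, since $\La$ can be recovered from $4\,\Ga-2\,\La$ and $\Phi_{\rm lin}$ together with the standard-type terms by part~1 of Remark~\ref{remark_equality}, the reduced identities modulo $S$ are spanned by $\Ga$ and $\Phi_{\rm lin}$, both of which are in the list by Lemma~\ref{lemma_deg1111_id} and Lemma~\ref{lemma_deg22_id}. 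Thus $f=f_1+f_2\in S$.

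For linear independence, I would suppose a vanishing linear combination $\sum$ of the nine elements equals $0$ in $\FF\LA X\RA$ and read off coefficients of well-chosen monomials. The four leading monomials $x_1x_2x_3x_4$-type cyclic words distinguish the six $x_i\St_3$ and $\St_3(\cdots)x_j$ pieces from each other and from $\Ga,\Phi_{\rm lin}$; for instance monomials of the form $x_ix_jx_kx_l$ with $i$ fixed first and the standard alternation in the last three isolate each $x_i\St_3$, while monomials ending in $x_j$ with standard alternation in the first three isolate the $\St_3(\cdots)x_j$; then one or two further reduced monomials appearing in $\Ga$ but not in $\Phi_{\rm lin}$ (or vice versa) separate those two. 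Since $p\neq 2$ the relevant $2\times2$ (or small) coefficient determinants — essentially the same $\pm1, \pm2, \pm3$ patterns seen in the proofs of Propositions~\ref{prop_deg22} and~\ref{prop_id211} — are invertible, forcing all coefficients to be zero.

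The main obstacle is the first part: correctly setting up and solving the linear system coming from $f_1(c_i,c_j,c_k,c_l)=0$, and in particular verifying that its solution space over an \emph{arbitrary} infinite field of characteristic $\neq 2$ is exactly two-dimensional and spanned by (the reduced parts of) $\Ga$ and $\Phi_{\rm lin}$. One must be careful that the polynomial identities in $i,j,k,l$ genuinely force each coefficient of each monomial $i^aj^bk^cl^d$ to vanish — valid because $\FF$ is infinite — and that no characteristic-dependent degeneracy (beyond the excluded $p=2$) collapses the rank; handling the $e\le 2$ boundary cases of Lemma~\ref{lemma_decomposition} uniformly (using the stated convention that negative powers of $x$ carry zero coefficient) keeps the bookkeeping clean. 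Everything else is routine linear algebra, and the identities of Remark~\ref{remark_equality} do the work of rewriting $\La$ and $\St_3(x_1,x_2,x_3)x_4$ in terms of the chosen basis.
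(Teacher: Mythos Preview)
Your plan is essentially the paper's approach, with two execution differences worth flagging. For spanning, instead of solving the full $17$-variable system for reduced identities and then identifying its $2$-dimensional solution space, the paper first uses $\Ga$ and $\La$ (both reduced) to eliminate the coefficients of $x_4x_1x_3x_2$ and $x_4x_2x_3x_1$, and then exhibits a concrete $15\times15$ matrix $A$ built from fifteen specific evaluations $f(c_i,c_j,c_k,c_l)$ and coefficients of $x^r y^s$; it computes $\det A=-64$, so over any field with $p\neq2$ the remaining fifteen coefficients vanish. This is logically equivalent to your plan but has the advantage of pinpointing exactly where the hypothesis $p\neq2$ enters (a single determinant), rather than tracking rank through a larger system. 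For linear independence, the paper does not chase coefficients of individual monomials as you propose; it simply observes that the spanning argument shows every multilinear degree-$4$ element reduces, modulo the span $S$, to a combination of $15$ monomials, so $\dim S\geq 24-15=9$, and since $S$ is spanned by nine elements they must be independent. Your coefficient-reading approach would also work but is more laborious.
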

\begin{proof} By Proposition~\ref{theo_PI_min} and Lemmas~\ref{lemma_deg22_id},~\ref{lemma_deg1111_id} all elements from the formulation of the proposition are identities for $\A_1^{(-,1)}$. By part 1 of Remark~\ref{remark_equality} we can consider $\La$ instead of  $\Phi_{\rm lin}$ in the formulation of the proposition.

Assume that $f\in\FX$ is a polynomial identity for $\A_1^{(-,1)}$ of multidegree $(1,1,1,1)$.  By Lemma~\ref{lemma_ireduced} and part 2 of Remark~\ref{remark_equality} we can assume that $f$ is reduced, i.e., 

\begin{eq}\label{eq_reduced}
\begin{array}{rl}
f = & \hspace{.4cm} \al_1\, x_1 x_2 x_3 x_4  +  \al_2\, x_1 x_2 x_4 x_3 + 
   \al_3\, x_1 x_3 x_2 x_4  + \al_4\,x_1 x_3 x_4 x_2 + 
   \al_5\, x_1 x_4 x_2 x_3 \\
&+\, \al_6\, x_2 x_1 x_3 x_4 + \al_7\, x_2 x_1 x_4 x_3 + \al_8\, x_2 x_3 x_1 x_4 + \al_9\, x_2 x_3 x_4 x_1 + \al_{10}\, x_2 x_4 x_1 x_3 \\
 & + \al_{11}\, x_3 x_1 x_2 x_4 +  \al_{12}\, x_3 x_1 x_4 x_2+ \al_{13}\, x_3 x_2 x_4 x_1  + \al_{14}\, x_3 x_4 x_1 x_2 \\
 &  + \al_{15}\, x_4 x_1 x_2 x_3 + \al_{16}\, x_4 x_1 x_3 x_2 + \al_{17}\, x_4 x_2 x_3 x_1 ,\\
   \end{array}
\end{eq}
\noindent{}where $\al_1,\ldots,\al_{17} \in \FF$. Note that 
$$\Ga=  h_1 - x_4 x_2 x_3 x_1  \;\text{ and }\; \La = h_2 + x_4 x_1 x_3 x_2 -x_4 x_2 x_3 x_1,$$
where $h_1,h_2$ are linear combinations of reduced monomials different from  $x_4 x_1 x_3 x_2$ and $x_4 x_2 x_3 x_1$. Then $h= f  + (\al_{16} + \al_{17})\Ga - \al_{16} \La$ does not contain monomials $x_4 x_1 x_3 x_2$, $x_4 x_2 x_3 x_1$. Hence, considering polynomial identity $h$ instead of $f$, we can assume that $\al_{16}=\al_{17}=0$.

To obtain equations on $\al_1,\ldots,\al_{15}$ we consider $f(c_i,c_j,c_k,c_l)=0$ and take the coefficient of $x^r y^s$ for certain $i,j,k,l,r,s$. The resulting linear equation $\ga_1\al_1 +\cdots +\ga_{15} \al_{15}=0$ for some $\ga_1,\ldots,\ga_{15}\in\FF$ we write down as the line $(\ga_1,\ldots,\ga_{15})$ in the matrix $A$ below. Here is the list of parameters  $i,j,k,l,r,s$ which we consider:

\begin{center}
    \begin{tabular}{l l l}
        $\bullet$ $f(c_1,c_0,c_0,c_0)$, $xy^4$;
            & $\bullet$ $f(c_1,c_0,c_0,c_0)$, $y^3$;
            & $\bullet$ $f(c_0,c_1,c_0,c_0)$, $y^3$; \\
        $\bullet$ $f(c_0,c_0,c_1,c_0)$, $y^3$;
            & $\bullet$ $f(c_2,c_0,c_0,c_0)$, $y^2$; 
            & $\bullet$ $f(c_0,c_2,c_0,c_0)$, $y^2$; \\
        $\bullet$ $f(c_0,c_0,c_2,c_0)$, $y^2$; 
            & $\bullet$ $f(c_1,c_1,c_0,c_0)$, $y^2$;
            & $\bullet$ $f(c_1,c_0,c_1,c_0)$, $y^2$; \\
        $\bullet$ $f(c_1,c_1,c_1,c_0)$, $y$; 
            & $\bullet$ $f(c_1,c_1,c_0,c_1)$, $y$; 
            &$\bullet$ $f(c_1,c_0,c_1,c_1)$, $y$; \\
        $\bullet$ $f(c_2,c_1,c_0,c_0)$, $y$; 
            & $\bullet$ $f(c_2,c_0,c_1,c_0)$, $y$; 
            & $\bullet$ $f(c_0,c_2,c_1,c_0)$, $y$. 
    \end{tabular}
\end{center}















The resulting matrix is 
$$A=\left(
\begin{array}{ccccccccccccccc}
1 &	1 & 1 &	1 &	1 & 1 &	1 &	1 &	1 &	1 &	1 &	1 &	1 &	1 &	1 \\
0 &	0 &	0 &	0 &	0 &	1 &	1 &	2 &	3 &	2 &	1 &	1 &	3 &	2 &	1 \\
1 &	1 &	2 &	3 &	2 &	0 &	0 &	0 &	0 &	0 &	2 &	3 &	1 &	3 &	2 \\
2 &	3 &	1 &	1 &	3 &	2 &	3 &	1 &	1 &	3 &	0 &	0 &	0 &	0 &	3 \\
0 &	0 &	0 &	0 &	0 &	0 &	0 &	2 &	6 &	2 &	0 &	0 &	6 &	2 &	0 \\
0 &	0 &	2 &	6 &	2 &	0 &	0 &	0 &	0 &	0 &	2 &	6 &	0 &	6 &	2 \\
2 &	6 &	0 &	0 &	6 &	2 &	6 &	0 &	0 &	6 &	0 &	0 &	0 &	0 &	6 \\
0 &	0 &	0 &	0 &	0 &	0 &	0 &	0 &	0 &	0 &	1 &	2 &	2 &	4 &	1 \\
0 &	0 &	0 &	0 &	0 &	1 &	2 &	1 &	2 &	4 &	0 &	0 &	0 &	0 &	2 \\
0 &	0 &	0 &	0 &	0 &	0 &	0 &	0 &	0 &	0 &	0 &	0 &	0 &	0 &	1 \\
0 &	0 &	0 &	0 &	0 &	0 &	0 &	0 &	0 &	0 &	1 &	1 &	1 &	1 &	0 \\
0 &	0 &	0 &	0 &	0 &	1 &	1 &	1 &	1 &	1 &	0 &	0 &	0 &	0 &	0 \\
0 &	0 &	0 &	0 &	0 &	0 &	0 &	0 &	0 &	0 &	0 &	0 &	2 &	2 &	0 \\
0 &	0 &	0 &	0 &	0 &	0 &	0 &	0 &	2 &	2 &	0 &	0 &	0 &	0 &	0 \\
0 &	0 &	0 &	2 &	2 &	0 &	0 &	0 &	0 &	0 &	0 &	0 &	0 &	0 &	2 \\
\end{array}
\right).$$
Since $\det(A)=-64$ is non-zero, we obtain that $\al_1=\cdots=\al_{15}=0$, i.e, $f=0$. Thus any  polynomial identity $f\in\FX$ for $\A_1^{(-,1)}$ of multidegree $(1,1,1,1)$ is an $\FF$-linear combination of polynomial identities from the formulation of the proposition. 

Note that we have proven that any element $f\in\FX$ of multidegree $(1,1,1,1)$ can be written as a linear combination of 15 monomials, modulo the subspace generated by elements from the formulation of the proposition.  Comparing the dimensions, we obtain that elements from the formulation of the proposition are linearly independent.
\end{proof}

\begin{propArt}\label{prop_id1111_p2} The following set is an $\FF$-basis of the space of all polynomial identities for $\A_1^{(-,1)}$ of multidegree $(1,1,1,1)$ in case $p=2$:
$$\Ga,\; \Psi,\;\De,\;\La, \; x_1\St_3(x_2,x_3,x_4), \; x_2\St_3(x_1,x_3,x_4), \; x_3\St_3(x_1,x_2,x_4), \; x_4\St_3(x_1,x_2,x_3), $$
$$\St_3(x_2,x_3,x_4)x_1,\; \St_3(x_1,x_3,x_4)x_2,\; \St_3(x_1,x_2,x_4)x_3,$$
$$[[x_1,x_3],[x_2,x_4]].$$
\end{propArt}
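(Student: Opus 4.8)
The plan is to mirror the proof of Proposition~\ref{prop_id1111}, adjusting for the extra identities available in characteristic $2$. First I would verify that every element in the list is indeed a polynomial identity for $\A_1^{(-,1)}$ when $p=2$: the elements $x_i\St_3(\ldots)$ and $\St_3(\ldots)x_i$ are identities by Proposition~\ref{theo_PI_min}; the elements $\Ga$, $\La$, $\De$ are identities by Lemma~\ref{lemma_deg1111_id}; the element $\Psi$ is an identity by Lemma~\ref{lemma_deg22_id}; and $[[x_1,x_3],[x_2,x_4]]$ is the complete linearization in two variables of $[[x_1,x_2],[x_3,x_4]]$, which is an identity by Lemma~\ref{lemma_deg22_id}, hence a partial linearization and so again an identity. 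This handles the inclusion ``span of the list $\subseteq$ space of identities''.

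Next I would establish that the space of all multidegree $(1,1,1,1)$ identities is contained in the span of the list. By Lemma~\ref{lemma_ireduced} together with part~2 of Remark~\ref{remark_equality}, any identity $f$ can be reduced, modulo the eight polynomials $x_i\St_3(\ldots)$ and $\St_3(\ldots)x_i$ (of which Remark~\ref{remark_equality}.2 gives one linear relation, leaving seven independent ones on the list plus $\La$ being expressible via $\Ga$ and $\Phi_{\rm lin}$ in the $p\neq2$ case — but here $\La$, $\Ga$, $\Psi$, $\De$ are all on the list), to a reduced polynomial as in~(\ref{eq_reduced}) with $17$ coefficients $\al_1,\dots,\al_{17}$. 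As in Proposition~\ref{prop_id1111}, I would use $\Ga$ and $\La$ to eliminate the monomials $x_4x_1x_3x_2$ and $x_4x_2x_3x_1$, assuming $\al_{16}=\al_{17}=0$, and then I would use $\Psi$ and $\De$ (the genuinely new $p=2$ identities, which are reduced) to eliminate two more reduced monomials — I expect $\Psi = \Psi(x_1,x_2,x_4,x_3)+\ldots$ to kill one monomial and $\De$ another, leaving $13$ free coefficients. Then I would plug in $f(c_i,c_j,c_k,c_l)=0$ for a suitable list of $13$ tuples (chosen as in Proposition~\ref{prop_id1111}, possibly with adjustments since $p=2$ makes several of the earlier rows degenerate), extract the coefficient of an appropriate $x^ry^s$ via Lemma~\ref{lemma_decomposition}, assemble a $13\times13$ matrix $A$ over $\FF_2$, and check (by the computer program~\cite{comp}) that $\det A\neq 0$ in characteristic $2$, forcing $f=0$.

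Finally, linear independence of the listed elements follows by the dimension count, exactly as in Proposition~\ref{prop_id1111}: the argument above shows every multidegree $(1,1,1,1)$ polynomial is congruent, modulo the span of the listed identities, to a linear combination of the remaining reduced monomials, whose number equals $24 - (\text{length of the list})$; comparing with the total dimension $24$ of the space of multidegree $(1,1,1,1)$ polynomials shows the list is linearly independent. The main obstacle I anticipate is twofold: first, correctly bookkeeping which reduced monomials $\Psi$ and $\De$ eliminate and confirming that together with $\Ga$, $\La$ they span a $4$-dimensional space inside the reduced monomials complementary to a well-chosen $13$-dimensional coordinate subspace; and second, the characteristic-$2$ determinant computation, since many of the coefficients $j+2k+3l$, $(k+2l)(\ldots)+\ldots$ from Lemma~\ref{lemma_decomposition} collapse mod $2$, so the tuple list from Proposition~\ref{prop_id1111} may need to be modified to keep $A$ invertible — this is where the computer check in~\cite{comp} is essential.
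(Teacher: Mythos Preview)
Your overall strategy is correct, but there is a concrete gap in the reduction step that will make the computation fail. You propose to eliminate four reduced monomials using $\Ga$, $\La$, $\Psi$, $\De$, leaving $13$ coefficients and a $13\times 13$ evaluation matrix. But the list of identities contains \emph{twelve} elements: seven independent $\St_3$-type polynomials (after Remark~\ref{remark_equality}.2) together with the five elements $\Ga$, $\Psi$, $\De$, $\La$, $[[x_1,x_3],[x_2,x_4]]$. Since the identity space turns out to be exactly $12$-dimensional, reducing modulo only $11$ of them leaves a one-dimensional subspace of identities among your $13$ remaining monomials, and hence your $13\times 13$ matrix $A$ will have determinant $0$. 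Your own dimension count at the end (``$24-(\text{length of the list})$'') gives $12$, not $13$, which is the signal that something is missing.

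What the paper does is to bring $[[x_1,x_3],[x_2,x_4]]$ into the reduction as well. This element is not reduced (it contains $x_2x_4x_3x_1$ and $x_4x_2x_1x_3$), so one first forms the reduced combination
\[
g=[[x_1,x_3],[x_2,x_4]] + x_2\St_3(x_1,x_3,x_4) + \St_3(x_1,x_2,x_4)x_3,
\]
and then checks that $\Ga,\Psi,\De,\La,g$ allow one to eliminate five reduced monomials (the paper uses the set $S=\{x_1x_2x_4x_3,\,x_3x_2x_4x_1,\,x_3x_4x_1x_2,\,x_4x_1x_3x_2,\,x_4x_2x_3x_1\}$ and verifies that the corresponding $5\times 5$ coefficient matrix over $\FF_2$ is invertible). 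This leaves $12$ free coefficients and a $12\times 12$ evaluation matrix, which is then shown to have determinant $1$.

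A minor point: $[[x_1,x_3],[x_2,x_4]]$ is already multilinear, so it is not a linearization of $[[x_1,x_2],[x_3,x_4]]$; it is obtained by the variable permutation $x_2\leftrightarrow x_3$, and the fact that $\Id{\algV}$ is closed under linear endomorphisms of $\FX$ (see Section~\ref{section_intro_PI}) is what guarantees it is again an identity.
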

\begin{proof} By Proposition~\ref{theo_PI_min} and Lemmas~\ref{lemma_deg22_id},~\ref{lemma_deg1111_id} all elements from the formulation of the proposition are identities for $\A_1^{(-,1)}$. 

Assume that $f\in\FX$ is a polynomial identity for $\A_1^{(-,1)}$ of multidegree $(1,1,1,1)$. By Lemma~\ref{lemma_ireduced} and part 2 of Remark~\ref{remark_equality} we can assume that $f$ is reduced, i.e., can be written as in formula~(\ref{eq_reduced}). Note that  
\begin{align*}
g = & \ [[x_1,x_3],[x_2,x_4]] + x_2\St_3(x_1,x_3,x_4) + \St_3(x_1,x_2,x_4)x_3 \\
= & \ x_1x_2x_4x_3 + x_1x_3x_2x_4 + x_1x_3x_4x_2 + x_1x_4x_2x_3 \\ 
+& \ x_2x_1x_3x_4 + x_2x_3x_1x_4 + x_2x_3x_4x_1 + x_2x_4x_1x_3  \\
+& \ x_3x_1x_2x_4 + x_3x_1x_4x_2 + x_4x_1x_2x_3 + x_4x_2x_3x_1.
\end{align*}
is reduced. Thus
\begin{eq}\label{eq_S}
\begin{array}{rcl}
\Ga  &=& h_1 + x_3x_4x_1x_2 + x_4 x_2 x_3 x_1, \\ 
\Psi &=& h_2 + x_3x_4x_1x_2\\ 
\De  &=& h_3 + x_3x_4x_1x_2 + x_4x_1x_3x_2,\\
\La  &=& h_4 + x_1x_2x_4x_3 + x_3x_2x_4x_1+x_4 x_1 x_3 x_2 + x_4 x_2 x_3 x_1,\\
g    &=& h_5 + x_1x_2x_4x_3 + x_4x_2x_3x_1,\\
\end{array}
\end{eq}


\noindent{}where $h_1,\ldots,h_5$ are linear combinations of reduced monomials which do not lie in the set $S$:
$$S=\{x_1x_2x_4x_3,\; x_3x_2x_4x_1,\; x_3x_4x_1x_2,\;x_4 x_1 x_3 x_2, x_4 x_2 x_3 x_1\}.$$ 

Consider~(\ref{eq_S}) as a system of liner equations on elements of $S$. Since the corresponding matrix
$$
\left(
\begin{array}{ccccc}
0 & 0 & 1 & 0 & 1 \\
0 & 0 & 1 & 0 & 0 \\
0 & 0 & 1 & 1 & 0 \\
1 & 1 & 0 & 1 & 1 \\
1 & 0 & 0 & 0 & 1 \\
\end{array}
\right)
$$
is invertible, the elements  from the set $S$ are linear combinations of $\Ga$, $\Psi$, $\De$, $\La$, $g$ and reduced monomials which do not lie in the set $S$. 
Hence, without loss of generality we can assume that $\al_2=\al_{13}=\al_{14}=\al_{16}=\al_{17}=0$.

To obtain equations on $\al_1,\al_3,\al_4,\al_5,\ldots,\al_{12},\al_{15}$ we consider $f(c_i,c_j,c_k,c_l)=0$ and take the coefficient of $x^r y^s$ for certain $i,j,k,l,r,s$. The resulting linear equation $$\ga_1\al_1 +\ga_3\al_3+ \cdots +\ga_{12} \al_{12}+\ga_{15}\al_{15}=0$$ 
for some $\ga_i\in\FF$ we write down as the line  $(\ga_1,\ga_3,\ga_4,\ga_5,\ldots,\ga_{12},\ga_{15})$ in the matrix $A$ below. Here is the list of parameters  $i,j,k,l,r,s$ which we consider:
\begin{center}
    \begin{tabular}{l l l}
$\bullet$ $f(c_1,c_0,c_0,c_0)$, $xy^4$; & $\bullet$ $f(c_1,c_0,c_0,c_0)$, $y^3$; & $\bullet$ $f(c_0,c_1,c_0,c_0)$, $y^3$; \\ 
$\bullet$ $f(c_0,c_0,c_1,c_0)$, $y^3$; & 
$\bullet$ $f(c_1,c_1,c_0,c_0)$, $y^2$; & $\bullet$ $f(c_1,c_0,c_1,c_0)$, $y^2$; \\ 
$\bullet$ $f(c_1,c_1,c_1,c_0)$, $y$; & $\bullet$ $f(c_1,c_1,c_1,c_0)$, $xy^2$; &
$\bullet$ $f(c_1,c_1,c_0,c_1)$, $y$; \\
$\bullet$ $f(c_1,c_1,c_0,c_1)$, $xy^2$; & $\bullet$ $f(c_1,c_0,c_1,c_1)$, $y$; & $\bullet$ $f(c_1,c_0,c_1,c_1)$, $xy^2$.
    \end{tabular}
\end{center}












The resulting matrix is 
$$A=\left(
\begin{array}{cccccccccccc}
1 &	1 &	1 &	1 &	1 &	1 &	1 &	1 &	1 &	1 &	1 &	1 \\
0 &	0 &	0 &	0 &	1 &	1 &	0 &	1 &	0 &	1 &	1 &	1 \\
1 &	0 &	1 &	0 &	0 &	0 &	0 &	0 &	0 &	0 &	1 &	0 \\
0 &	1 &	1 &	1 &	0 &	1 &	1 &	1 &	1 &	0 &	0 &	1 \\
0 &	0 &	0 &	0 &	0 &	0 &	0 &	0 &	0 &	1 &	0 &	1 \\
0 &	0 &	0 &	0 &	1 &	0 &	1 &	0 &	0 &	0 &	0 &	0 \\
0 &	0 &	0 &	0 &	0 &	0 &	0 &	0 &	0 &	0 &	0 &	1 \\
1 &	1 &	0 &	0 &	1 &	0 &	1 &	0 &	0 &	1 &	0 &	1 \\
0 &	0 &	0 &	0 &	0 &	0 &	0 &	0 &	0 &	1 &	1 &	0 \\
0 &	0 &	0 &	1 &	0 &	1 &	0 &	0 &	1 &	1 &	1 &	1 \\
0 &	0 & 0 &	0 &	1 &	1 &	1 &	1 &	1 &	0 &	0 &	0 \\
0 &	0 &	1 &	0 &	1 &	1 &	1 &	1 &	1 &	0 &	1 &	0 \\
\end{array}
\right).$$
Since $\det(A)=1$, we obtain that $\al_1=\al_3=\al_4=\al_5=\cdots=\al_{12}=\al_{15}=0$, i.e, $f=0$. Thus any  polynomial identity $f\in\FX$ for $\A_1^{(-,1)}$ of multidegree $(1,1,1,1)$ is an $\FF$-linear combination of polynomial identities from the formulation of the proposition. 

Note that we have proven that any element $f\in\FX$ of multidegree $(1,1,1,1)$ can be written as a linear combination of 12 monomials, modulo the subspace generated by elements from the from formulation of the proposition.  Comparing the dimensions, we obtain that elements from the formulation of the proposition are linearly independent.
\end{proof}

\section*{Acknowledgments}  This work was supported by RSF 22-11-00081. The authors thank the anonymous referees for valuable comments and suggestions.



\EditInfo{September 25, 2023}{February 26, 2024}{Adam Chapman, Ivan Kaygorodov, Mohamed Elhamdadi}

\end{document}